\small \bf {British Journal of Mathematics \& Computer Science}
\newcommand{\Z}{{\mathbf Z}}
\newcommand{\R}{\mathbf{R}}
\renewcommand{\P}{\mathrm{P}}
\newcommand {\E}{\mathrm{E}}
\newcommand {\C}{\mathrm{C}}
\renewcommand{\d}{\text{\rm d}}
\newcommand{\e}{\text{\rm e}}
\newcommand{\sA}{\mathcal{A}}
\newcommand{\lip}{\mathrm{Lip}}
\newtheorem{stat}{Statement}[section]
\newtheorem{proposition}[stat]{Proposition}
\newtheorem{theorem}[stat]{Theorem}
\newtheorem{lemma}[stat]{Lemma}
\theoremstyle{definition}
\newtheorem{definition}[stat]{Definition}\newtheorem{remark}[stat]{Remark}
\newtheorem{condition}[stat]{Condition}
\numberwithin{equation}{section}
\begin{document}

\hyphenpenalty=100000

\begin{flushright}

{\Large \textbf{On Space-Time Fractional Heat Type Non-Homogeneous Time-Fractional Poisson Equation}}\\[5mm]
{\textbf{Ejighikeme McSylvester Omaba}}$^\mathrm{1^*}$\\[3mm]
\footnote{\emph{*Corresponding author: E-mail: mcsylvester.omaba@funai.edu.ng;}}$^\mathrm{1}${\footnotesize \it Department of Mathematics, Computer Science, Statistics and Informatics, Faculty of Science, Federal University Ndufu-Alike Ikwo, Ebonyi State, Nigeria.}
\end{flushright}

\begin{abstract} Consider the following space-time fractional heat equation with Riemann-Liouville derivative of non-homogeneous time-fractional Poisson process 
\begin{eqnarray*}
\partial^\beta_t u(x,t)=-\kappa(-\Delta)^{\alpha/2} u(x,t)+I_t^{1-\beta}[\sigma(u)D_t^\vartheta N^\nu_\lambda(t)],\,\,t\geq 0,\,x\in\R^d,
\end{eqnarray*}where $\kappa>0,\,\,\beta,\,\vartheta\in(0,1),\,\,\nu\in(0,1],\,\alpha\in(0,2].$ The operator
 $D_t^\vartheta N^\nu_\lambda(t)=\frac{\d}{\d t}I_t^{1-\vartheta}N_\lambda^\nu(t)=\frac{\d}{\d t}\mathcal{N}_\lambda^{1-\vartheta,\nu}(t)$ with $\mathcal{N}_\lambda^{1-\vartheta,\nu}(t)$ the  Riemann-Liouville non-homogeneous fractional integral process, $\partial^\beta_t$ is the Caputo fractional derivative, $-(-\Delta)^{\alpha/2}$ is the generator of an isotropic stable process, $I^\beta_t$ is the fractional integral operator, and $\sigma : \R \rightarrow \R$ is Lipschitz continuous. The above time fractional stochastic heat type equations may be used to model sequence of catastrophic events with thermal memory. The mean and variance for the process $\frac{\d}{\d t}\mathcal{N}^{1-\vartheta,\nu}_\lambda(t)$ for some specific rate functions were computed. Consequently, the growth moment bounds for the class of heat equation perturbed with the non-homogeneous fractional time Poisson process were given and we show that the solution grows  exponentially for some  ~small ~time~ interval $t\in [t_0,T],\,\,T<\infty$~ and $t_0>1$;\,\, ~that is, ~the ~result ~establishes ~that ~the ~energy ~of ~the ~~solution~ ~grows ~~atleast as  
$ c_4(t+t_0)^{(\vartheta-a\nu)}\exp(c_5 t)$ and at most as $c_1 t^{(\vartheta- a\nu)}\exp(c_3 t)$ for different conditions on the initial data, where $c_1,\,c_3,\,c_4$ and $c_5$ are some positive constants depending on $T$. Existence and uniqueness result for the mild solution to the equation was given under linear growth condition on $\sigma$.  
\end{abstract}
\noindent
\footnotesize{\it{Keywords:} Caputo derivative; energy moment bounds; fractional heat kernel; fractional Duhamel's principle; riemann-Liouville derivative; riemann-Liouville integral process. }\\[3mm]
\footnotesize{{\bf{AMS 2010 Subject Classification:}} 35R60, 60H15;  82B44, 26A33, 26A42.}

\section{Introduction}
The authors in \cite{Foondun},\,\,\cite{Mijena}, considered the following equations
\begin{eqnarray*}
\partial^\beta_t u(x,t)=-\kappa(-\Delta)^{\alpha/2}u(x,t)+I_t^{1-\beta}[\lambda\sigma(u)\dot{W}(t,x)],
\end{eqnarray*}
\begin{eqnarray*}
\partial^\beta_t u(x,t)=-\kappa(-\Delta)^{\alpha/2}u(x,t)+I_t^{1-\beta}[\lambda\sigma(u)\dot{F}(t,x)],
\end{eqnarray*}
in $(d+1)$ dimensions, where $\kappa>0,\,\beta\in (0, 1),\,\alpha\in (0, 2]$ and $d < \min\{2,\beta^{-1}\}\alpha,\,\partial^\beta_t$ is the Caputo fractional derivative, $-(-\Delta)^{\alpha/2}$ is the generator of an isotropic stable process, $I^\beta_t$ is the fractional integral operator, $\dot{W} (t, x)$ is space-time white noise, and $\sigma : \R \rightarrow \R$ is Lipschitz continuous. 
See \cite{Foondun},\,\,\cite{Mijena} for the formulation of solutions of the above equations and \cite{Umarov},\,\,\cite{Umarov1} for the use of {\bf time fractional Duhamel's principle} and how to remove the operator $\partial_t^{1-\beta}$ term appearing in the solution by defining a fractional integral operator $I^{1-\beta}_t$. We now attempt to define the equivalent equation for Riemann-Liouville derivative of non-homogeneous time-fractional Poisson process 
\begin{eqnarray}\label{eqn:white}
\partial^\beta_t u(x,t)=-\kappa(-\Delta)^{\alpha/2} u(x,t)+I_t^{1-\beta}[\sigma(u)D_t^\vartheta N^\nu_\lambda(t)],\,\,t\geq 0,\,x\in\R^d,
\end{eqnarray}
where $D_t^\vartheta N^\nu_\lambda(t)=\frac{\d}{\d t}I_t^{1-\vartheta}N_\lambda^\nu(t)=\frac{\d}{\d t}\mathcal{N}_\lambda^{1-\vartheta,\nu}(t)$ with $\mathcal{N}_\lambda^{1-\vartheta,\nu}(t)$ the  Riemann-Liouville non-homogeneous fractional integral process studied by Orsingher and Polito \cite{Orsingher} and $I_t^{1-\vartheta}$ is the fractional integral operator. We therefore study some growth bounds for the above space-time fractional heat equation with Riemann-Liouville derivative of non-homogeneous time-fractional Poisson process with Caputo derivatives. The mean and variance for the process for some specific rate functions were computed and consequently the moment growth bounds were estimated, and we conclude that the solution (or the energy of the solution) grows in time at most a precise exponential rate at some small time interval.\\[3mm]
\noindent
The fractional Poisson process is a generalisation of the standard Poisson process. The use of fractional Poisson process has received serious interest for almost two decades now. The process was first introduced and studied by Repin and Saichev \cite{Repin}, followed by Laskin \cite{Laskin} and many others like Mainardi and his co-authors \cite{Gorenflo},\,\,\cite{Gorenflo1},\,\,\cite{Mainardi},\,\,\cite{Mainardi1},  Beghin and Orsingher \cite{Beghin},\,\,\cite{Beghin1},\,\,\cite{Orsingher} and its representation in terms of stable subordinator \cite{Gorenflo},\,\,\cite{Gorenflo1},\,\,\cite{Leonenko},\,\,\cite{Meerschaert} and \cite{Silva}.
See the above papers and their references for a complete study on fractional Poisson process and its fractional distributional properties. See a recent article \cite{Maheshwari} on non-homogeneous fractional Poisson processes which involves replacing the time parameter in the fractional Poisson process with some suitable function of time and also some numerical (or modelling) applications in \cite{Esen},\,\,\cite{Gomez} and \cite{Tasbozan}. The physical motivation to studying the above time-fractional SPDEs is that they may arise naturally in modelling sequence of catastrophic events with thermal memories; example, in  hydrology and Seismology, it may be used to model earthquake inter-arrival times, \cite{Biard},\,\,\cite{Crescenzo},\,\,\cite{Laskin1},\,\,\cite{Repin} and \cite{Scalas}. Let $\gamma>0$ and define the fractional integral by $$I^\gamma f(t)=\frac{1}{\Gamma(\gamma)}\int_0^t(t-s)^{\gamma-1}f(s)\d s.$$
  The Caputo time-fractional derivative is given by 
 \begin{eqnarray*}
D^\beta_* u(x,t)=\left \{
\begin{array}{lll}
 \frac{1}{\Gamma(m-\beta)}\int_0^t\frac{u^{(m)}(x,s)}{(t-s)^{\beta+1-m}}\d s,\,\,m-1<\beta<m,\\
\frac{\d^m}{\d t^m}u(x,t),\,\,\beta=m,
  \end{array}\right.
\end{eqnarray*}
and with $m=1$, we denote the Caputo derivative of order $\beta\in(0,1)$  by:
$$\partial_t^\beta u(x,t)= \frac{1}{\Gamma(1-\beta)}\int_0^t\partial_s u(x,s)\frac{\d s}{(t-s)^{\beta}}.$$ 
For $1-\beta\in(0, 1)$ and $g\in L^\infty(\R_+)$ or $g\in C(\R_+)$, then $$\partial^{1-\beta}_t I^{1-\beta}_t g(t) = g(t).$$
 We also define a Riemann-Liouville time-fractional derivative  by 
 \begin{eqnarray*}
D^\vartheta f(t)=\left \{
\begin{array}{lll}
 \frac{\d^m}{\d t^m}\bigg[\frac{1}{\Gamma(m-\vartheta)}\int_0^t\frac{f(s)}{(t-s)^{\vartheta+1-m}}\d s\bigg],\,\,m-1<\vartheta<m,\\
\frac{\d^m}{\d t^m}f(t),\,\,\vartheta=m.
  \end{array}\right.
\end{eqnarray*}
Now to make sense of the derivative $D_t^\vartheta f(t):=D^\vartheta f(t)$ for $m=1$ and $\vartheta\in(0,1)$, that is, for $D_t^\vartheta f(t)=D^1_t I^{1-\vartheta}_t f(t),$ we state the following theorem:
\begin{theorem}
Let $f(x,t)$ be a well-behaved function such that the partial derivative of $f$ with respect to $t$ exists and is continuous. Then
$$\frac{\d}{\d t}\bigg(\int_{a(t)}^{b(t)}f(x,t)\d x\bigg)=\int_{a(t)}^{b(t)}\partial_t f(x,t)\d x+f(b(t),t).b'(t)-f(a(t),t).a'(t).$$ 
\end{theorem}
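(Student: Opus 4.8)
The plan is to reduce the variable-limit differentiation to a single application of the multivariable chain rule together with the Fundamental Theorem of Calculus. First I would introduce the auxiliary three-variable function
$$F(u,v,t)=\int_u^v f(x,t)\,\d x,$$
so that the quantity to be differentiated is the composition $t\mapsto F(a(t),b(t),t)$. Under the stated hypotheses, namely that $f$ is well-behaved and $\partial_t f$ exists and is continuous, the function $F$ is continuously differentiable in each of its three arguments, so the chain rule applies and yields
$$\frac{\d}{\d t}F(a(t),b(t),t)=\frac{\partial F}{\partial u}\,a'(t)+\frac{\partial F}{\partial v}\,b'(t)+\frac{\partial F}{\partial t}.$$

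Next I would evaluate the three partial derivatives separately. For the two limit variables the Fundamental Theorem of Calculus gives
$$\frac{\partial F}{\partial v}=f(v,t),\qquad \frac{\partial F}{\partial u}=-f(u,t),$$
the sign in the second expression arising because $u$ is the lower endpoint of integration. For the parameter variable one differentiates under the integral sign with the limits held fixed, obtaining
$$\frac{\partial F}{\partial t}=\int_u^v \partial_t f(x,t)\,\d x.$$
Substituting $u=a(t)$ and $v=b(t)$ then assembles the three contributions into precisely the claimed identity.

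The main obstacle, and really the only nontrivial point, is justifying the interchange of differentiation and integration in the last display, i.e. that $\partial F/\partial t$ equals the integral of $\partial_t f$. This is exactly where the continuity assumption on $\partial_t f$ is indispensable: writing the difference quotient $[f(x,t+h)-f(x,t)]/h$ and applying the mean value theorem in the $t$ variable produces $\partial_t f(x,\xi)$ for some intermediate $\xi$; since $\partial_t f$ is continuous, it is bounded on the compact region swept out by $x\in[u,v]$ and $t$ ranging over a small closed interval, which permits passage to the limit under the integral sign via uniform (or dominated) convergence. Once this interchange is secured, the remaining steps are the standard one-variable Fundamental Theorem of Calculus and are immediate. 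An alternative, fully self-contained route would bypass the chain rule and argue directly from the definition of the derivative, splitting the increment of the integral into the three pieces corresponding to the moving endpoints and the changing integrand; this is more computational but rests on the same continuity hypothesis at its crucial step.
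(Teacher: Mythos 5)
Your proposal is correct, and it is the standard proof of this statement, which is the classical Leibniz integral rule. Note that the paper itself offers no proof at all: the theorem is simply quoted as a known fact and immediately applied (in the following remark) to $f(s,t)=(t-s)^{-\vartheta}N^\nu_\lambda(s)$ to make sense of the Riemann--Liouville derivative $D^\vartheta_t N^\nu_\lambda(t)$. Your chain-rule decomposition via $F(u,v,t)=\int_u^v f(x,t)\,\d x$, the two Fundamental Theorem of Calculus evaluations, and the mean-value-theorem-plus-uniform-continuity justification for differentiating under the integral sign are exactly the ingredients needed, and you correctly flag the interchange of limit and integral as the one step where continuity of $\partial_t f$ is essential (together with the implicit differentiability of $a$ and $b$, and enough regularity of $f$ for $F$ to be $C^1$ so the chain rule applies). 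So your argument supplies a proof the paper leaves out rather than diverging from one it contains.
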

\begin{remark}Let $f(s,t)=(t-s)^{-\vartheta} N^\nu_\lambda(s)$, then applying the above theorem,we have
\begin{eqnarray*}
D^\vartheta_t N^\nu_\lambda(t)=\frac{1}{\Gamma(1-\vartheta)}\frac{\d}{\d t}\int_0^tf(s,t)\d s&=&\frac{1}{\Gamma(1-\vartheta)}\int_0^t \partial_t f(s,t)\d s\\&=&\frac{-\vartheta}{\Gamma(1-\vartheta)}\int_0^t(t-s)^{-\vartheta-1} N^\nu_\lambda(s)\d s.
\end{eqnarray*}
\end{remark}
\noindent
Define the mild solution to equation \eqref{eqn:white} in sense of Walsh \cite{Walsh} by following similar step in \cite{Foondun},\,\,\cite{Mijena} as follows:
\begin{definition}
We say that a process $\{u(x,t)\}_{x\in \R^d, t>0}$ is a mild solution of 
\eqref{eqn:white} if a.s, the following is satisfied
\begin{eqnarray}\label{mild:white}
u(x,t)&=&\int_{\R^d} G_{\alpha,\beta}(t,x-y)u_0(y)\d y\\&+&\int_0^t\int_{\R^d} G_{\alpha,\beta}(t-s,x-y)\sigma(u(s,y))D_s^\vartheta N^\nu_\lambda(s)\d y\d s\nonumber\\
&=&\int_{\R^d} G_{\alpha,\beta}(t,x-y)u_0(y)\d y+\int_0^t\int_{\R^d} G_{\alpha,\beta}(t-s,x-y)\sigma(u(s,y))\nonumber\\&\times&\bigg\{\frac{-\vartheta}{\Gamma(1-\vartheta)}\int_0^s(s-\tau)^{-\vartheta-1} N^\nu_\lambda(\tau)\d\tau\bigg\}\d y\d s,\nonumber
\end{eqnarray}
where $ G_{\alpha,\beta}(t,x)$ is the time-fractional heat kernel. 
 If in addition to the above, $\{u(x,t)\}_{x\in \R^d, t>0}$ satisfies the following condition
\begin{equation}\label{r-field}
\sup_{0\leq t\leq T}\sup_{x\in \R^d}\E|u(x,t)|<\infty,
\end{equation}
for all $T>0$, then we say that $\{u(x,t)\}_{x\in \R, t>0}$ is a random field solution to \eqref{eqn:white}
with the following norm: $$\|u\|_{1,\beta}=\sup_{t\in[0,T]}\sup_{x\in\R^d}\e^{-\beta t}\E|u(x,t)|,\,\,\textrm{for}\,\,\beta>0.$$
\end{definition}
\noindent
The paper is outlined as follows. Section 2 gives the summary statement of theorems of the main results. Section 3 surveys some basic preliminary concepts, including estimates on the mean and variance of the Riemann-Liouville fractional integral process and its non-homogeneous counterpart. Some auxiliary results for existence and uniqueness result were obtained in section 4, and the energy moment growth estimates, proofs of main results and conclusion of the results given in section 5.
\section{Main Results}We assume the following condition on $\sigma$; which says essentially that $\sigma$ is globally Lipschitz:
\begin{condition}\label{cond:E-U}
There exists a finite positive constant, $\lip_\sigma$ such that for all $x,\,y\in \R$, we have 
\begin{equation*}\label{cond:sigma}
 |\sigma(x)-\sigma(y)|\leq \lip_\sigma|x-y|.
\end{equation*}
We will take $\sigma(0)=0$ for convenience.
\end{condition}
\noindent
For the lower bound result, we require the following extra condition on $\sigma$:
\begin{condition}\label{cond:L-B}There exists a finite positive constant, $L_\sigma$ such that for all $x\in\R$, we have, 
$$\sigma(x)\geq L_\sigma|x|.$$
\end{condition}
\noindent
Here, we give the statements of our main results. The first result follows by assuming that the non-random initial data $u_0:\R^d\rightarrow\R$ is a non-negative bounded function.
\begin{theorem}\label{growth-upperbound}
Given that condition \ref{cond:E-U} holds and $u_0$ bounded above, then there exists $t_0>1$ such that for all $t_0<t<T<\infty$, we have
 $$\sup_{x\in\R^d}\E|u(x,t)|\leq c_1 t^{\vartheta-\nu}\exp(c_3 t),$$ with $c_1=c T^{\nu-\vartheta+\frac{\beta}{\alpha}(1-d)},\,\,\textrm{and}\,\,c_3=\frac{\lambda\lip_\sigma c_2}{\Gamma(1-\vartheta+\nu)} T^{\nu-\vartheta+\frac{\beta}{\alpha}(1-d)}.$
\end{theorem}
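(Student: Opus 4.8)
The plan is to run a first–moment (energy) estimate directly on the mild solution \eqref{mild:white}, reduce it to a linear Volterra integral inequality for the quantity $F(t):=\sup_{x\in\R^d}\E|u(x,t)|$, and close the argument with Gronwall's inequality. Concretely, I would take absolute values and expectations in \eqref{mild:white} and split by the triangle inequality into the initial–data term $J_0(x,t)=\int_{\R^d}G_{\alpha,\beta}(t,x-y)u_0(y)\,dy$ and the driving term $J_1(x,t)$ coming from the double integral against $D_s^\vartheta N^\nu_\lambda(s)$.

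For $J_0$, since $u_0$ is bounded, $|J_0(x,t)|\le\|u_0\|_\infty\int_{\R^d}G_{\alpha,\beta}(t,x-y)\,dy$, which I would control using the space–time scaling of the time–fractional heat kernel (the $L^1$ bound in space together with the pointwise decay $\sup_z G_{\alpha,\beta}(r,z)\le C\,r^{-\beta d/\alpha}$). This is the step that produces the $T^{\frac{\beta}{\alpha}(1-d)}$ factor appearing inside $c_1$ and $c_3$.

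For the driving term I would first invoke Condition \ref{cond:E-U} with the normalisation $\sigma(0)=0$ to get $|\sigma(u(s,y))|\le\lip_\sigma|u(s,y)|$, and then replace the driving factor by its mean. Using the explicit representation of $D_s^\vartheta N^\nu_\lambda(s)$ obtained in the Remark and substituting the mean of the (non-homogeneous) fractional Poisson process for the specific rate function, $\E N^\nu_\lambda(\tau)=\lambda\tau^{\nu}/\Gamma(1+\nu)$, the Riemann–Liouville derivative of a power gives
$$m(s):=\E\big[D_s^\vartheta N^\nu_\lambda(s)\big]=\frac{\lambda}{\Gamma(1-\vartheta+\nu)}\,s^{\nu-\vartheta},$$
which is exactly where the constant $\Gamma(1-\vartheta+\nu)$ in $c_3$ and the polynomial–in–time factor originate. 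Feeding this back yields
$$J_1(x,t)\le\frac{\lambda\lip_\sigma}{\Gamma(1-\vartheta+\nu)}\int_0^t\int_{\R^d}G_{\alpha,\beta}(t-s,x-y)\,s^{\nu-\vartheta}\,\E|u(s,y)|\,dy\,ds.$$
Taking $\sup_{x\in\R^d}$, bounding the spatial kernel integral by the same heat–kernel estimate (the source of the constant $c_2$ and the remaining $T$-power), and restricting to $t\in[t_0,T]$ with $t_0>1$ — which lets me dominate the non-exponential polynomial time factors by their endpoint values — I reduce everything to a linear inequality $F(t)\le c_1\,t^{\vartheta-\nu}+c_3\int_0^t F(s)\,ds$ on $[t_0,T]$. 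A direct application of Gronwall's inequality then gives $F(t)\le c_1\,t^{\vartheta-\nu}\exp(c_3 t)$, as claimed.

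The main obstacle will be the driving term itself: rigorously justifying that the singular kernel $(s-\tau)^{-\vartheta-1}$ in $D_s^\vartheta N^\nu_\lambda(s)$ can be integrated against $G_{\alpha,\beta}$ and that its whole contribution is captured by the mean $m(s)$ — that is, interchanging the expectation with the (formally divergent) time derivative and factoring the mean of the point-process driver out of its product with $\sigma(u)$, which genuinely relies on the adaptedness/independence structure of $u$ relative to $N^\nu_\lambda$. The secondary difficulty is establishing the time–fractional heat–kernel estimates with the precise exponent $\tfrac{\beta}{\alpha}(1-d)$ and arranging the collapse of the polynomial time factors on $[t_0,T]$ so that the time–power and the exponential separate cleanly in the final bound.
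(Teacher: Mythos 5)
Your overall strategy is the same as the paper's: split the mild solution into the initial-data term and the driving term, bound the initial term via the kernel estimates of Lemma \ref{Caputo-fractional} (this is exactly Lemma \ref{lemma4}, the source of the $t^{\frac{\beta}{\alpha}(1-d)}$ factor), replace the driver by its mean $\lambda s^{\nu-\vartheta}/\Gamma(1-\vartheta+\nu)$ from Lemma \ref{lem:frac}, use Condition \ref{cond:E-U} with $\sigma(0)=0$, collapse the time powers on $[t_0,T]$, and finish with Gronwall. (The interchange of expectation with the singular driver, which you flag as the main obstacle, is in fact passed over silently by the paper as well, so that is not where you diverge.) Where you do diverge is the Gronwall endgame, and there your argument has a genuine gap.

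The paper does not run Gronwall on $F(t)=\sup_{x\in\R^d}\E|u(x,t)|$; it runs it on the weighted quantity $f_{\vartheta,\nu}(t)=t^{\nu-\vartheta}F(t)$, precisely so that the weight $s^{\nu-\vartheta}$ inside the Volterra integral is absorbed into the unknown and the resulting inequality $f_{\vartheta,\nu}(t)\le c_1+c_3\int_0^t f_{\vartheta,\nu}(s)\,\d s$ has a \emph{constant} forcing term; Gronwall then cleanly gives $f_{\vartheta,\nu}(t)\le c_1\e^{c_3t}$, i.e.\ $F(t)\le c_1t^{\vartheta-\nu}\e^{c_3t}$. You instead keep $F$ and assert that $F(t)\le c_1t^{\vartheta-\nu}+c_3\int_0^tF(s)\,\d s$ ``directly'' yields $F(t)\le c_1t^{\vartheta-\nu}\e^{c_3t}$. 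That implication requires the forcing $a(t)=c_1t^{\vartheta-\nu}$ to be non-decreasing. But in the only regime where the power-collapsing steps (yours and the paper's) are legitimate --- namely $\nu\ge\vartheta$ and $\frac{\beta}{\alpha}(1-d)\ge0$, which is what is needed to bound $s^{\nu-\vartheta}(t-s)^{\frac{\beta}{\alpha}(1-d)}$ by $T^{\nu-\vartheta+\frac{\beta}{\alpha}(1-d)}$ uniformly in $0<s<t<T$ --- the exponent $\vartheta-\nu$ is $\le0$, so $a$ is non-increasing, and the implication is false in general: Gronwall only gives $F(t)\le a(t)+c_3\int_0^t a(s)\e^{c_3(t-s)}\,\d s$, and since $s^{\vartheta-\nu}\ge t^{\vartheta-\nu}$ for $s<t$ this does not collapse to $a(t)\e^{c_3t}$ (for the extremal $F$ achieving equality the ratio of the two sides grows without bound in $t$). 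Conversely, if $\vartheta>\nu$ your Gronwall step would be fine, but then the preceding reduction fails: $s^{\nu-\vartheta}$ blows up at $s=0$ and cannot be bounded by $T^{\nu-\vartheta}$ inside $\int_0^t$. So in either sign regime one of your two steps breaks. The repair is either the paper's substitution $f_{\vartheta,\nu}(t)=t^{\nu-\vartheta}F(t)$, or keeping the full Gronwall formula and accepting the weaker constant $c_1\big(1+\tfrac{c_3T}{1+\vartheta-\nu}\big)$ in place of $c_1$, which still gives the claimed qualitative bound since all constants in the theorem depend on $T$ anyway.
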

\noindent
Next we drop the assumption that the initial condition $u_0$ is bounded above and assume that $u_0(x)$ is positive: 
\begin{definition}
The initial function $u_0$ is assumed to be a bounded non-negative function such that  $$\int_A u_0(x)\d x>0,\,\,\text{for some}\,\, A\subset\R^d.$$ 
That is, we define $u_0$ as any measurable function $u_0:\R^d\rightarrow\R_+$ which is positive on a set of positive measure. This assumption implies that the set $A=\big\{x:u_0(x)>\frac{1}{n}\big\}\subset\R^d$  has positive measure for all but finite many $n$. Thus by Chebyshev's inequality, $$\int_{\R^d} u_0(x)\d x\geq \int_{\{x:\,u_0(x)>\frac{1}{n}\}} u_0(x)\d x\geq \frac{1}{n}\mu\bigg\{x:u_0(x)>\frac{1}{n}\bigg\}>0,$$ where $\mu$ is a Lebesgue measure.
\end{definition}
\noindent
 Therefore with the assumption that the initial condition $u_0$ is positive on a set of positive measure, we then have the following lower bound estimate:
\begin{theorem}\label{growth-firstmoment} Suppose that condition \ref{cond:L-B} together with $\|u_0\|_{L^1(B(0,1))}>0$ hold. Then there exists $t_0>1$ such that for all $t_0<t<T<\infty$, we have
$$\inf_{x\in B(0,1)}\E|u(x,t)|\geq c_4(t+t_0)^{\vartheta-\nu}\exp(c_5 t),\,\,\text{for all}\,\, t\in[t_0, T],$$ where $c_4=  c_1(T+t_0)^{-\big\{{\beta d}/\alpha+\vartheta-\nu\big\}},$ and $c_5=\frac{\lambda L_\sigma c_3}{\Gamma(1-\vartheta+\nu)} (T+t_0)^{\nu-\vartheta} T^{-\beta/\alpha}$. 
\end{theorem}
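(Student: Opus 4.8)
The plan is to run the lower-bound analogue of the argument behind Theorem \ref{growth-upperbound}, replacing the global Lipschitz bound by the one-sided bound $\sigma(x)\geq L_\sigma|x|$ of Condition \ref{cond:L-B}. First I would take expectations throughout the mild formulation \eqref{mild:white}. Because $u_0\geq 0$ and the space-time fractional heat kernel $G_{\alpha,\beta}$ is nonnegative, I would argue that $\E u(x,t)\geq 0$ and work with $\E u(x,t)$ directly. Inserting Condition \ref{cond:L-B} into the forcing term and replacing the driving process by the mean of $D_s^\vartheta N^\nu_\lambda$ computed in Section 3 --- whose magnitude carries the factor $\frac{\lambda}{\Gamma(1-\vartheta+\nu)}s^{\nu-\vartheta}$ that accounts for both the Gamma factor and the power $\nu-\vartheta$ in $c_5$ --- I would obtain a closed linear integral lower bound
\begin{equation*}
\E u(x,t)\geq \int_{\R^d}G_{\alpha,\beta}(t,x-y)u_0(y)\,\d y+\frac{\lambda L_\sigma}{\Gamma(1-\vartheta+\nu)}\int_0^t s^{\nu-\vartheta}\int_{\R^d}G_{\alpha,\beta}(t-s,x-y)\,\E u(s,y)\,\d y\,\d s.
\end{equation*}

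Next I would estimate both terms from below uniformly for $x\in B(0,1)$. For the deterministic term I would invoke the pointwise lower bound for $G_{\alpha,\beta}(t,x-y)$ valid for $x,y\in B(0,1)$ and $t\in[t_0,T]$, which produces a constant multiple of $\|u_0\|_{L^1(B(0,1))}>0$ times the negative power $(T+t_0)^{-\{\beta d/\alpha+\vartheta-\nu\}}$ --- precisely the constant $c_4$ together with the prefactor $(t+t_0)^{\vartheta-\nu}$. The shift from $t$ to $t+t_0$ and the restriction $t_0>1$ are introduced so that both the singular time weight $(t-s)^{-\vartheta-1}$ inherited from $D_s^\vartheta N^\nu_\lambda$ and the kernel bounds stay controlled away from the origin, where they would otherwise blow up. Writing $F(t):=\inf_{x\in B(0,1)}\E u(x,t)$ and bounding the spatial convolution $\int_{B(0,1)}G_{\alpha,\beta}(t-s,x-y)\,\d y$ from below uniformly in $x$, I would collapse the display above into the scalar renewal inequality
\begin{equation*}
F(t)\geq c_4\,(t+t_0)^{\vartheta-\nu}+\frac{\lambda L_\sigma}{\Gamma(1-\vartheta+\nu)}\int_{t_0}^t \sK(t,s)\,F(s)\,\d s,
\end{equation*}
where $\sK(t,s)$ gathers the $B(0,1)$-kernel mass and the weight $s^{\nu-\vartheta}$.

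Finally I would extract the exponential growth rate from this inequality. The cleanest route is a self-consistent comparison: I would substitute the ansatz $F(t)\geq c_4\,(t+t_0)^{\vartheta-\nu}\exp(c_5 t)$ into the right-hand side and verify, by evaluating the resulting time integral through a Beta-function (equivalently Mittag-Leffler) estimate, that the inequality reproduces itself provided $c_5$ is taken as in the statement; alternatively one iterates the integral operator and sums the series, which converges to an exponential of the same rate. I expect the main obstacle to be making the reduction to this scalar inequality rigorous, namely bounding the spatial mass $\int_{B(0,1)}G_{\alpha,\beta}(t-s,x-y)\,\d y$ below uniformly in $x\in B(0,1)$ while simultaneously taming the singular time kernel, so that the remaining one-dimensional estimate yields the \emph{precise} constants $c_4$ and $c_5$ rather than merely some exponential rate. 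A secondary subtlety is justifying the interchange of expectation with the space-time integral and the passage $\E[\sigma(u)\,D_s^\vartheta N^\nu_\lambda]\to L_\sigma\,\E u\cdot\E D_s^\vartheta N^\nu_\lambda$, which rests on the adaptedness of $u(s,\cdot)$ and the mean computations of Section 3.
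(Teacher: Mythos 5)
Your proposal follows essentially the same route as the paper's proof: take expectations in the mild formulation, insert Condition \ref{cond:L-B} and the mean $\frac{\lambda s^{\nu-\vartheta}}{\Gamma(1-\vartheta+\nu)}$ of the driving process, lower-bound the semigroup term (the paper cites Proposition \ref{estimatePDEpart} for this, which is exactly your kernel-bound computation against $\|u_0\|_{L^1(B(0,1))}$) and the kernel mass over $B(0,1)$, then close with a Gronwall-type lower bound on the resulting scalar renewal inequality. The only cosmetic difference is that the paper absorbs the algebraic weight into the unknown, setting $g_{\vartheta,\nu}(t)=(t+t_0)^{\nu-\vartheta}\inf_{x\in B(0,1)}\E|u(x,t+t_0)|$ so that a constant-coefficient Gronwall inequality applies directly, whereas you keep the weight in the ansatz and extract the exponential by iterating the integral operator---the same argument in substance, and both share (the paper silently, you explicitly) the step of replacing $\E[\sigma(u)\,D_s^\vartheta N_\lambda^\nu]$ by the product of expectations.
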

\noindent
We also give equivalent results for the non-homogeneous fractional time process for the Weibull's rate function as follow:
\begin{theorem}\label{growth-upperbound-non}
Given that condition \ref{cond:E-U} holds and $u_0$ bounded above, then there exists $t_0>1$ such that for all $t_0<t<T<\infty$, we have
 $$\sup_{x\in\R^d}\E|u(x,t)|\leq c_1 t^{\vartheta-a\nu}\exp(c_3 t),$$ with $c_1= c T^{a\nu-\vartheta+\frac{\beta}{\alpha}(1-d)},\,\,\textrm{and}\,\,c_3=\frac{b^{-a\nu}\lip_\sigma c_2}{\Gamma(\nu+1)}\frac{\Gamma(1+a\nu)}{\Gamma(1-\vartheta+a\nu)} T^{a\nu-\vartheta+\frac{\beta}{\alpha}(1-d)}.$
\end{theorem}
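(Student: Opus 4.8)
\noindent
The structure of this proof mirrors that of Theorem \ref{growth-upperbound}: the only genuine change is that the driving term $D^\vartheta_s N^\nu_\lambda(s)$ now carries the Weibull rate function, so its mean (computed in Section 3) is a different power of time. The plan is therefore to start from the mild formulation \eqref{mild:white}, take absolute values and expectations, use condition \ref{cond:E-U} to control $\sigma$, insert the mean of the driving process, and reduce everything to a linear integral inequality for $F(t):=\sup_{x\in\R^d}\E|u(x,t)|$ that I then close by a Gronwall-type estimate to obtain the factor $\exp(c_3 t)$.

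First I would split \eqref{mild:white} into the initial-data term and the stochastic term. For the first, boundedness of $u_0$ together with the mass/scaling bound on the kernel $G_{\alpha,\beta}(t,\cdot)$ produces a contribution of order $\|u_0\|_\infty$ times a power of $t$; on the finite interval $[t_0,T]$ with $t_0>1$ this is what is absorbed into the prefactor $t^{\vartheta-a\nu}$ and the constant $c_1=cT^{a\nu-\vartheta+\frac{\beta}{\alpha}(1-d)}$, the exponent $\frac{\beta}{\alpha}(1-d)$ being exactly the heat-kernel scaling. For the stochastic term, $\sigma(0)=0$ and condition \ref{cond:E-U} give $|\sigma(u(s,y))|\leq\lip_\sigma|u(s,y)|$, after which the contribution of $D^\vartheta_s N^\nu_\lambda(s)$ is controlled through its mean.

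The decisive analytic input is the mean of the Weibull-rate process from Section 3,
$$\E\big[D^\vartheta_s N^\nu_\lambda(s)\big]=\frac{b^{-a\nu}}{\Gamma(\nu+1)}\,\frac{\Gamma(1+a\nu)}{\Gamma(1-\vartheta+a\nu)}\,s^{a\nu-\vartheta},$$
which is simply the Riemann--Liouville derivative $D^\vartheta$ of the mean of the non-homogeneous fractional Poisson process (proportional to $s^{a\nu}$); this is precisely where the Gamma ratio $\Gamma(1+a\nu)/\Gamma(1-\vartheta+a\nu)$, the factor $b^{-a\nu}/\Gamma(\nu+1)$, and the power $s^{a\nu-\vartheta}$ entering $c_3$ and the final exponent originate. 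Combining this with the Lipschitz bound and the spatial integral of the kernel yields, for $t\in[t_0,T]$, an inequality of renewal type
$$F(t)\leq A+B\int_0^t F(s)\,s^{a\nu-\vartheta}\,\d s,$$
where $A$ collects the initial-data contribution and $B$ is the constant that becomes $c_3$ after the $T$-dependent kernel factor $T^{a\nu-\vartheta+\frac{\beta}{\alpha}(1-d)}$ is extracted.

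Finally I would close the inequality. Since $t_0>1$ and we work on the bounded interval $[t_0,T]$ (with integrability guaranteed by $a\nu-\vartheta>-1$), the singular weight $s^{a\nu-\vartheta}$ is bounded above and below by $T$-dependent constants, so the renewal inequality collapses to a standard linear Gronwall inequality $F(t)\leq A+B'\int_{t_0}^{t}F(s)\,\d s$; Gronwall's lemma then gives the exponential factor $\exp(c_3 t)$, and reinstating the time weight and the constant $c_1$ yields $\sup_{x\in\R^d}\E|u(x,t)|\leq c_1 t^{\vartheta-a\nu}\exp(c_3 t)$. I expect the main obstacle to be the bookkeeping at this last step: tracking how the singular time weight $s^{a\nu-\vartheta}$, the heat-kernel scaling $T^{\frac{\beta}{\alpha}(1-d)}$, and the restriction $t>t_0>1$ interact so that the fractional weight may be replaced by $T$-dependent constants \emph{without} altering the exact form of $c_1$ and $c_3$ --- this is the step in which the hypotheses $t_0>1$ and $T<\infty$ are essential.
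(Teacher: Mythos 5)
Your overall route coincides with the paper's: the paper offers no separate argument for Theorem \ref{growth-upperbound-non}, stating only that it follows from the proof of Theorem \ref{growth-upperbound} with the homogeneous mean replaced by the Weibull-rate mean
$\E[D^\vartheta_s N^\nu_\lambda(s)]=\frac{b^{-a\nu}}{\Gamma(\nu+1)}\frac{\Gamma(1+a\nu)}{\Gamma(1-\vartheta+a\nu)}\,s^{a\nu-\vartheta}$,
and you correctly identify that lemma as the decisive input and set up the same integral inequality from the mild formulation \eqref{mild:white} and condition \ref{cond:E-U}. The genuine gap is in your closing step. From
$$F(t)\leq A+B\int_0^t s^{a\nu-\vartheta}F(s)\,\d s,\qquad F(t)=\sup_{x\in\R^d}\E|u(x,t)|,$$
you cannot pass to $F(t)\leq A+B'\int_{t_0}^{t}F(s)\,\d s$: the discarded piece $B\int_0^{t_0}s^{a\nu-\vartheta}F(s)\,\d s$ sits on the right-hand side of an upper bound, so it cannot simply be dropped; it has to be estimated (using $a\nu-\vartheta>-1$ and boundedness of $F$ on $[0,t_0]$ from the random-field property \eqref{r-field}) and absorbed into the additive constant, which alters $c_1$ --- exactly the difficulty you flag as ``bookkeeping'' but never resolve. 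Worse, in your unweighted formulation Gronwall can only ever produce a bound of the form $A'\exp(B't)$; the algebraic prefactor $t^{\vartheta-a\nu}$ has no source, because there is no ``time weight to reinstate'': $F$ itself is the quantity being bounded.

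The missing idea --- and what the paper's template actually does --- is to put the weight inside the unknown \emph{before} applying Gronwall: define $f(t)=t^{a\nu-\vartheta}\sup_{x\in\R^d}\E|u(x,t)|$, multiply the integral inequality through by $t^{a\nu-\vartheta}$, and bound $t^{a\nu-\vartheta}(t-s)^{\frac{\beta}{\alpha}(1-d)}\leq T^{a\nu-\vartheta+\frac{\beta}{\alpha}(1-d)}$ (this uses the same implicit sign assumptions, $a\nu>\vartheta$ and $\frac{\beta}{\alpha}(1-d)>0$, made in the proof of Theorem \ref{growth-upperbound}). This yields the plain inequality $f(t)\leq c_1+c_3\int_0^t f(s)\,\d s$ on all of $[0,t]$ --- no truncation at $t_0$, no singular weight left in the kernel --- and Gronwall gives $f(t)\leq c_1\exp(c_3 t)$, i.e. the stated bound with exactly the stated $c_1$ and $c_3$. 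Alternatively, staying with your unweighted $F$: when $a\nu\geq\vartheta$ the weight satisfies $s^{a\nu-\vartheta}\leq T^{a\nu-\vartheta}$ on all of $[0,t]$, so Gronwall from $0$ gives $F(t)\leq cT^{\frac{\beta}{\alpha}(1-d)}\exp(c_3 t)$, and the prefactor is recovered from $cT^{\frac{\beta}{\alpha}(1-d)}=c_1T^{\vartheta-a\nu}\leq c_1t^{\vartheta-a\nu}$ for $t\leq T$; but this again needs the sign condition, and it makes your $[t_0,t]$ truncation superfluous rather than justified.
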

\begin{theorem}\label{growth-firstmoment-non} Suppose that condition \ref{cond:L-B} together with $\|u_0\|_{L^1(B(0,1))}>0$ hold. Then there exists $t_0>1$ such that for all $t_0<t<T<\infty$, we have
$$\inf_{x\in B(0,1)}\E|u(x,t)|\geq c_4(t+t_0)^{\vartheta-a\nu}\exp(c_5 t),\,\,\text{for all}\,\, t\in[t_0, T],$$ where $c_4=  c_1(T+t_0)^{-\big\{{\beta d}/\alpha+\vartheta-a\nu\big\}},\,\,\textrm{and}\,\,c_3=\frac{b^{-a\nu}L_\sigma c_3}{\Gamma(\nu+1)}\frac{\Gamma(1+a\nu)}{\Gamma(1-\vartheta+a\nu)}(T+t_0)^{a\nu-\vartheta} T^{-\beta/\alpha}$.
\end{theorem}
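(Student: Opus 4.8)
The plan is to follow the architecture of the homogeneous lower bound, Theorem~\ref{growth-firstmoment}, replacing throughout the first moment of the driving process by the mean $m(s):=\E\big[D_s^\vartheta N^\nu_\lambda(s)\big]$ associated with the Weibull rate function, which was computed in Section~3 and carries exactly the factors $b^{-a\nu}$, $\Gamma(\nu+1)^{-1}$ and $\Gamma(1+a\nu)/\Gamma(1-\vartheta+a\nu)$ appearing in $c_5$, together with the time power $s^{a\nu-\vartheta}$. Concretely, I would start from the mild formulation~\eqref{mild:white}, restrict $x\in B(0,1)$, take absolute values and expectations, and invoke Condition~\ref{cond:L-B} in the form $\sigma(u)\geq L_\sigma|u|\geq 0$ together with the nonnegativity of the time-fractional heat kernel $G_{\alpha,\beta}$ and of $u_0$ so that no mass is discarded and a genuine lower bound is retained.

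For the free term I would use the lower bound on $G_{\alpha,\beta}$ over the unit ball from the preliminary section to get $\int_{\R^d}G_{\alpha,\beta}(t,x-y)u_0(y)\,\d y\geq c\,t^{-\beta d/\alpha}\|u_0\|_{L^1(B(0,1))}$ for $x\in B(0,1)$, which is strictly positive by hypothesis. Writing $F(t):=\inf_{x\in B(0,1)}\E|u(x,t)|$ and bounding the spatial kernel integral $\int_{B(0,1)}G_{\alpha,\beta}(t-s,x-y)\,\d y$ from below on the window $[t_0,T]$, I would reduce~\eqref{mild:white} to the scalar renewal inequality $F(t)\ge c\,t^{-\beta d/\alpha}\|u_0\|_{L^1(B(0,1))}+L_\sigma\int_0^t K(t,s)\,m(s)\,F(s)\,\d s$, where $K(t,s)$ is a strictly positive lower kernel.

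On the finite interval $[t_0,T]$ with $t_0>1$ I would then freeze the slowly varying time factors at the endpoint $T$ --- this is precisely the origin of the $T$- and $(T+t_0)$-powers inside $c_4$ and $c_5$. For $s\in[t_0,T]$ the product $s^{a\nu-\vartheta}(s+t_0)^{\vartheta-a\nu}$ is bounded above and below by constants depending only on $T$, so the mean's power cancels against the candidate base power up to $T$-constants, leaving a Gronwall-type inequality whose iteration generates the exponential factor. I would close the estimate with the fixed-point ansatz $F(t)\ge c_4(t+t_0)^{\vartheta-a\nu}\exp(c_5 t)$, verifying by induction on the Picard iterates that this form is reproduced by the right-hand side and that $c_4,c_5$ are forced to the stated values; the shift $t+t_0$ serves to keep all arguments away from the small-time singularity of $m$ and of the kernel.

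The main obstacle is the probabilistic decoupling in the drift: to pass from $\E\big[\sigma(u(s,y))\,D_s^\vartheta N^\nu_\lambda(s)\big]$ to $L_\sigma\,\E|u(s,y)|\cdot m(s)$ one must control the correlation between the $\mathcal{F}_s$-measurable solution and the non-local, non-martingale increment $D_s^\vartheta N^\nu_\lambda(s)$; I would handle this by conditioning and the nonnegativity of the Poisson-driven integrand, exactly as in the upper bound Theorem~\ref{growth-upperbound-non} but with the inequality reversed. The secondary technical point is the sharp lower control of the space--time fractional kernel $G_{\alpha,\beta}$ on $B(0,1)$, which fixes the exponent $-\beta d/\alpha$; everything after these two inputs is the bookkeeping that assembles the constants $c_4$ and $c_5$.
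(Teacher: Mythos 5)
Your proposal is correct and follows essentially the same route as the paper: the paper's entire proof of this theorem is the remark that it follows from the proof of Theorem~\ref{growth-firstmoment} with the homogeneous mean replaced by the Weibull-rate mean of $\frac{\d}{\d t}\mathcal{N}^{1-\vartheta,\nu}_\lambda(t)$, which is exactly your plan (mild formulation, kernel lower bound for the free term, Condition~\ref{cond:L-B}, freezing the time powers at $T$ and $T+t_0$, then a Gronwall-type iteration yielding $c_4(t+t_0)^{\vartheta-a\nu}\exp(c_5 t)$). If anything, you are more careful than the paper, which silently replaces $\E\big[\sigma(u(s,y))\,D^\vartheta_s N^\nu_\lambda(s)\big]$ by $\E|\sigma(u(s,y))|\cdot\E\big[D^\vartheta_s N^\nu_\lambda(s)\big]$ without addressing the correlation issue you flag.
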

\section{Preliminaries}
Consider the following fractional diffusion equation
\begin{eqnarray*}
\partial^\beta_t u(x,t)=-\kappa(-\Delta)^{\alpha/2}u(x,t),
\end{eqnarray*}with initial condition $u(x,0)=u_0(x)$. Given that the solution is 
$$u(x,t)=\int_{\R^d}G_{\alpha,\beta}(t,x-y)u_0(y)\d y,$$ and suppose that $G_{\alpha,\beta}(t,x)$ is the fundamental solution of the fractional heat type equation $$\partial^\beta_t G_{\alpha,\beta}(t,x)=-\kappa(-\Delta)^{\alpha/2}G_{\alpha,\beta}(t,x).$$
Take Laplace transform in the time variable and Fourier transform in the space variable of both sides of the above equation as follows:
$$s^\beta \widehat{\widetilde{G}}_{\alpha,\beta}(\xi,s)-s^{\beta-1}=-\kappa|\xi|^\alpha \widehat{\widetilde{G}}_{\alpha,\beta}(\xi,s),$$ which follows that $$ \widehat{\widetilde{G}}_{\alpha,\beta}(\xi,s)=\frac{s^{\beta-1}}{s^\beta+\kappa|\xi|^\alpha}.$$ Now take inverse Laplace transform in $s$, we have 
$$\widehat{G}_{\alpha,\beta}(\xi,t)=E_\beta(-\kappa|\xi|^\alpha t^\beta),$$ where $$E_\beta(x)=\sum_{n=0}^\infty\frac{x^n}{\Gamma(n\beta+1)},$$ is the Mittag-Leffler function with the following uniform estimate $$\frac{1}{1+\Gamma(1-\beta)x}\leq E_\beta(-x)\leq\frac{1}{1+\Gamma(1+\beta)^{-1}x}\,\,\textrm{for} \,\,x>0.$$ Next, take inverse Fourier transform in $\xi$, $$G_{\alpha,\beta}(t,x)=\frac{1}{2\pi}\int_{-\infty}^{+\infty}\e^{-i x\xi}E_\beta(-\kappa|\xi|^\alpha t^\beta)\d\xi.$$
We now make use of the following property of integral
$$\frac{1}{2\pi}\int_{-\infty}^{+\infty}\e^{-i x\xi}f(x\xi)\d\xi=\frac{1}{\pi}\int_0^{+\infty}f(x\xi)\cos(x\xi)\d\xi.$$ Therefore 
$$G_{\alpha,\beta}(t,x)=\frac{1}{\pi}\int_0^{+\infty}E_\beta(-\kappa|\xi|^\alpha t^\beta)\cos(x\xi)\d\xi.$$
For $\alpha=2,\,\,\beta=1$,
 $$G_{2,1}(t,x)=\frac{1}{\pi}\int_0^{+\infty}E_1(-\kappa\xi^2 t)\cos(x\xi)\d\xi=\frac{1}{\pi}\frac{\e^{-\frac{x^2}{4\kappa t}\sqrt{\pi}}}{2\sqrt{\kappa t}}=\frac{1}{\sqrt{4\kappa\pi t}}\exp(-\frac{x^2}{4\kappa t}).$$
 Let $X_t$  be a symmetric $\alpha$-stable process on $\R^d$ whose transition density $p(t,x)$, relative to Lebesgue measure, uniquely determined by its Fourier transform is: $$\E[\exp(i\xi X_t)]=\int_{\R^d} \e^{-i x\,\xi}p(t,\,x)\d x=\e^{-t\kappa |\xi|^\alpha},\quad \xi\in\R^d.$$ Let $\{D_\beta(t)\}_{t\geq0}$ be the $\beta$-stable subordinator with Laplace transform $\E[\e^{-s D_\beta(t)}]=\e^{-t s^\beta}$, or inverse stable subordinator of index $\beta$ and $E_t$ its first passage time. Given that the density of $E_t$ is  $$f_{E_t}(x)=t\beta^{-1}x^{-1-1/\beta}g_\beta(t x^{-1/\beta}),$$ with $g_\beta(.)$ the density function of $D_\beta(1)$, then the density $G_{\alpha,\beta}(t,x)$ of the time changed process $X_{E_t}$ is given by
 $$G_{\alpha,\beta}(t,x)=\int_0^\infty p(s,x)f_{E_t}(s)\d s.$$
\noindent
We now present some properties of $p(t,x)$, see \cite{Sugitani}, that will be needed to prove estimates on $G_{\alpha,\beta}(t,x)$.
\begin{eqnarray}\label{ker}
p(t,x)&=&t^{-d/\alpha}p(1,t^{-1/\alpha}x)\nonumber\\
 p(st,x)&=&t^{-d/\alpha}p(s,t^{-1/\alpha}x).
 \end{eqnarray}
 From the above relation, $p(t,0)=t^{-d/\alpha}\,p(1,0)$, is a decreasing function of $t$.  The heat kernel $p(t,x)$ is also a decreasing function of $|x|$, that is,
$$|x|\geq|y|\,\,\,\,\,\textrm{implies that}\,\,\,\,\, p(t,x)\leq p(t,y).$$ This and equation \eqref{ker} imply that for all $t\geq s$,
 \begin{eqnarray*}
 p(t,x)=p(t,|x|)&=&p\bigg(s.\frac{t}{s},|x|\bigg)=\bigg(\frac{t}{s}\bigg)^{-d/\alpha}p\bigg(s,\bigg(\frac{t}{s}\bigg)^{-1/\alpha}|x|\bigg)\\
 &\geq&\bigg(\frac{s}{t}\bigg)^{d/\alpha}p(s,|x|)\qquad \bigg(\textrm{since}\, \bigg(\frac{t}{s}\bigg)^{-1/\alpha}|x|\leq |x|\bigg)\\
 &=&\bigg(\frac{s}{t}\bigg)^{d/\alpha}p(s,x).
 \end{eqnarray*}  
 \begin{proposition}\label{prop:alpha-stable}
Let $p(t,x)$ be the transition density of a strictly $\alpha$-stable process. If $p(t,0)\leq1$ and $a\geq2$, then
\begin{equation*}
p\big(t,\frac{1}{a}(x-y)\big)\geq p(t,x)p(t,y),\,\,\,\forall\,x,\,y\in\R^d.
\end{equation*}
\end{proposition}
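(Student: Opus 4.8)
The plan is to exploit the two structural facts about $p(t,\cdot)$ recorded just above the statement: that it is radially non-increasing (so $|u|\le|v|$ forces $p(t,u)\ge p(t,v)$), together with the normalization hypothesis $p(t,0)\le 1$. The strategy is to bound the left-hand argument $\tfrac{1}{a}(x-y)$ in norm by whichever of $x,y$ is larger, thereby lower-bounding $p\big(t,\tfrac{1}{a}(x-y)\big)$ by $p(t,x)$, and then to use $p(t,0)\le 1$ to absorb the leftover factor $p(t,y)$.

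First I would assume without loss of generality that $|x|\ge|y|$, so that $\max(|x|,|y|)=|x|$; the claim is symmetric in $x$ and $y$, so this costs nothing. The key elementary estimate is that for $a\ge 2$ one has
$$\frac{1}{a}|x-y|\le\frac{1}{a}\big(|x|+|y|\big)\le\frac{2}{a}\max(|x|,|y|)\le\max(|x|,|y|)=|x|,$$
where the first inequality is the triangle inequality and the last uses $a\ge 2$ (hence $\tfrac{2}{a}\le 1$).

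Having established $\big|\tfrac{1}{a}(x-y)\big|\le|x|$, I would invoke the monotonicity of $p(t,\cdot)$ in the radial variable to conclude $p\big(t,\tfrac{1}{a}(x-y)\big)\ge p(t,x)$. It then remains only to dispose of the factor $p(t,y)$: again by radial monotonicity, $p(t,y)\le p(t,0)$, and by hypothesis $p(t,0)\le 1$, so $p(t,y)\le 1$. Chaining these observations gives
$$p(t,x)\,p(t,y)\le p(t,x)\le p\big(t,\tfrac{1}{a}(x-y)\big),$$
which is exactly the asserted inequality.

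I do not expect a genuine obstacle here; the entire argument is a short consequence of radial monotonicity and the triangle inequality. The hypothesis $a\ge 2$ is used only in the chain bounding $\tfrac{1}{a}|x-y|$ by $\max(|x|,|y|)$, and the normalization $p(t,0)\le 1$ enters only to discard the surplus factor $p(t,y)$. The one point worth stating carefully is the reduction to $|x|\ge|y|$ and the verification that the triangle-inequality bound does not require the radial symmetry of $x-y$ but only of the density, since $p(t,\cdot)$ depends on its argument through $|\cdot|$ alone.
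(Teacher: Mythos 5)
Your proof is correct and follows essentially the same route as the paper's: the triangle inequality with $a\ge 2$ to bound $\tfrac{1}{a}|x-y|$ by $|x|\vee|y|$, radial monotonicity of $p(t,\cdot)$, and the normalization $p(t,0)\le 1$ to dominate the product by the smaller factor. Your WLOG reduction $|x|\ge|y|$ just makes explicit the minimum $p(t,|x|)\wedge p(t,|y|)$ that the paper works with directly.
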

\begin{proof}
Given that $$\frac{1}{a}|x-y|\leq\frac{2}{a}|x|\vee\frac{2}{a}|y|\leq|x|\vee|y|,$$ then it follows from the above that,
\begin{eqnarray*}
p\big(t,\frac{1}{a}(x-y)\big)\geq p(t,|x|\vee|y|)
&\geq&  p(t,|x|)\wedge  p(t,|y|)\\&\geq& p(t,|x|) p(t,|y|)=p(t,x) p(t,y)
\end{eqnarray*}
\end{proof}
\noindent
The transition density also satisfies the following Chapman-Kolmogorov equation, $$\int_{\R^d}p(t,x)p(s,x)\d x= p(t+s,0).$$

\begin{lemma}\cite{Sugitani}
Suppose that $p(t,x)$ denotes the heat kernel for a strictly stable process of order $\alpha$.  Then the following estimate holds:
\begin{equation*}
p(t,x,y) \asymp t^{-d/\alpha}\wedge \frac{t}{|x-y|^{d+\alpha}},\quad\text{for\,\,all}\quad t>0\quad\text{and} \quad x,\,y\in \R^d.
\end{equation*}
Here and in the sequel, for two non-negative functions $f,g,\,\,\,f\asymp g$ means that there exists a positive constant $c>1$ such that $c^{-1}g\leq f\leq c\, g$ on their common domain of definition.
\end{lemma}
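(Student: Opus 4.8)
The plan is to exploit the scaling relation already recorded, namely $p(t,x)=t^{-d/\alpha}p(1,t^{-1/\alpha}x)$, to reduce the two-sided bound to the single fixed-time statement $p(1,x)\asymp 1\wedge|x|^{-(d+\alpha)}$ (for $\alpha\in(0,2)$, which is the regime in which the polynomial tail is genuine). Indeed, setting $y=t^{-1/\alpha}x$, one has $|y|^{-(d+\alpha)}=t^{(d+\alpha)/\alpha}|x|^{-(d+\alpha)}$, and $t^{-d/\alpha}\cdot t^{(d+\alpha)/\alpha}=t$, so an estimate $p(1,y)\asymp 1\wedge|y|^{-(d+\alpha)}$ feeds through the scaling to give $p(t,x)\asymp t^{-d/\alpha}\wedge\frac{t}{|x|^{d+\alpha}}$. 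Translation invariance then replaces $|x|$ by $|x-y|$, yielding exactly the claim. Thus the whole problem collapses to the profile $p(1,\cdot)$.

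For $p(1,\cdot)$ I would treat the near-origin and far-field regimes separately. The global upper bound $p(1,x)\le C$ is immediate from the Fourier representation $p(1,x)=(2\pi)^{-d}\int_{\R^d}\e^{-i x\cdot\xi}\e^{-\kappa|\xi|^\alpha}\d\xi$, since the integrand is dominated in modulus by the integrable function $\e^{-\kappa|\xi|^\alpha}$. For the near-origin lower bound I would use that $p(1,\cdot)$ is continuous, strictly positive, and (as recorded above) radially decreasing, so on any ball $|x|\le R$ one has $p(1,x)\ge p(1,R)>0$; together with the upper bound this pins down $p(1,x)\asymp 1$ on $\{|x|\le 1\}$.

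The far-field estimate $p(1,x)\asymp|x|^{-(d+\alpha)}$ for $|x|\ge 1$ is the crux, and I expect it to be the main obstacle. The cleanest route is to invoke the probabilistic structure: for $\alpha\in(0,2)$ the isotropic stable process is a pure-jump L\'evy process whose L\'evy measure has density $c_{d,\alpha}|z|^{-d-\alpha}$, and the tail of the transition density is governed by the \emph{one big jump} principle, $p(t,x)\approx t\,\nu(x)$ when $|x|\gg t^{1/\alpha}$. Making this rigorous---for example through the subordination representation $p(t,x)=\int_0^\infty(4\pi s)^{-d/2}\e^{-|x|^2/4s}\eta_t(s)\d s$, with $\eta_t$ the density of the $\alpha/2$-stable subordinator and known large-$s$ asymptotics $\eta_t(s)\sim c\,t\,s^{-1-\alpha/2}$---is the technical heart: one substitutes these asymptotics into the integral and shows that the dominant contribution reproduces $|x|^{-d-\alpha}$ with matching upper and lower constants. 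Alternatively one works directly with the oscillatory Fourier integral, integrating by parts and localising near $\xi=0$, where the non-smoothness of the symbol $|\xi|^\alpha$ generates precisely the $|x|^{-d-\alpha}$ decay. Either way, combining the bounded near-origin profile with this polynomial tail gives $p(1,x)\asymp 1\wedge|x|^{-(d+\alpha)}$, and the scaling reduction above then completes the argument; for the explicit constants I would defer to \cite{Sugitani}.
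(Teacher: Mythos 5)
The paper offers no proof of this lemma at all --- it is imported by citation from \cite{Sugitani}, where it is a classical Blumenthal--Getoor/P\'olya-type two-sided bound for stable densities --- so there is no internal argument for your sketch to match; what it does match is the standard literature proof, and in outline it is correct. The scaling reduction via $p(t,x)=t^{-d/\alpha}p(1,t^{-1/\alpha}x)$ plus translation invariance legitimately collapses the uniform-in-$t$ claim to the profile estimate $p(1,x)\asymp 1\wedge|x|^{-(d+\alpha)}$, and your treatment of the bounded regime is complete (integrable Fourier symbol for the upper bound; positivity, continuity and the radial monotonicity recorded in the paper for the lower bound). Two caveats deserve emphasis. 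First, your restriction to $\alpha\in(0,2)$ is not a blemish but a necessary correction to the statement as printed: for $\alpha=2$ the kernel is Gaussian with tail $\e^{-|x|^2/4\kappa t}$, so the lower half of the asserted bound is false, even though the paper elsewhere permits $\alpha\in(0,2]$. Second, essentially all of the substance sits in the far-field estimate $p(1,x)\asymp|x|^{-(d+\alpha)}$, which you name as the crux but do not execute; the subordination route you propose does go through --- insert the subordinator tail asymptotics $\eta_1(s)\sim c\,s^{-1-\alpha/2}$, substitute $u=|x|^2/4s$ so that the dominant contribution becomes $c'\,|x|^{-(d+\alpha)}\int_0^\infty u^{(d+\alpha)/2-1}\e^{-u}\,\d u$, and localise to $s\asymp|x|^2$ for the matching lower bound --- but as written your proposal, like the paper itself, ultimately defers the decisive step to the cited reference. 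Net assessment: correct in architecture and strictly more informative than the paper's bare citation, but incomplete at its own acknowledged technical core.
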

\noindent
We now state the following estimate on $G_{\alpha,\beta}(t,x)$ whose proof in \cite{Foondun} employ the above properties of heat kernel of $\alpha$-stable process.
\begin{lemma}\cite{Foondun}\label{Caputo-fractional} (a) There exists a positive constant $c_1$ such that for all $x\in\R^d$,
$$G_{\alpha,\beta}(t,x)\geq c_1\bigg(t^{-\frac{\beta d}{\alpha}}\wedge\frac{t^\beta}{|x|^{d+\alpha}}\bigg).$$
(b) If we further suppose that $\alpha>d$, then there exists a positive constant $c_2$ such that
$$G_{\alpha,\beta}(t,x)\leq c_2\bigg(t^{-\frac{\beta d}{\alpha}}\wedge\frac{t^\beta}{|x|^{d+\alpha}}\bigg).$$
\end{lemma}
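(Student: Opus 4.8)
The plan is to work directly from the subordination representation $G_{\alpha,\beta}(t,x)=\int_0^\infty p(s,x)f_{E_t}(s)\,\d s$ and to insert the two-sided heat-kernel estimate of the preceding lemma, $p(s,x)\asymp s^{-d/\alpha}\wedge\frac{s}{|x|^{d+\alpha}}$. Since $f_{E_t}(s)\,\d s$ is a positive measure, both bounds (a) and (b) reduce to controlling the single quantity
$$J(t,x):=\int_0^\infty\Big(s^{-d/\alpha}\wedge\frac{s}{|x|^{d+\alpha}}\Big)f_{E_t}(s)\,\d s,$$
because $G_{\alpha,\beta}(t,x)\asymp J(t,x)$ (the lower heat-kernel bound gives the lower bound on $G$, the upper one the upper bound). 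The target expression $t^{-\beta d/\alpha}\wedge t^\beta|x|^{-(d+\alpha)}$ switches behaviour exactly at $|x|\asymp t^{\beta/\alpha}$, i.e. at $s\asymp|x|^\alpha$ inside the integral, so the whole argument is organised around that crossover.

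The two ingredients I would use are the self-similarity of the inverse stable subordinator, $f_{E_t}(s)=t^{-\beta}f_{E_1}(st^{-\beta})$ (which follows from $E_t\stackrel{d}{=}t^\beta E_1$ and is consistent with the stated density), and the Mittag-Leffler moment formula $\int_0^\infty s^p f_{E_t}(s)\,\d s=\frac{\Gamma(1+p)}{\Gamma(1+\beta p)}\,t^{\beta p}$, valid for $p>-1$. Substituting $r=st^{-\beta}$ turns $J$ into $t^{-\beta d/\alpha}H(xt^{-\beta/\alpha})$ for the universal profile $H(y)=\int_0^\infty\big(r^{-d/\alpha}\wedge r|y|^{-(d+\alpha)}\big)f_{E_1}(r)\,\d r$, so it suffices to prove $H(y)\asymp 1\wedge|y|^{-(d+\alpha)}$; rewriting $t^{-\beta d/\alpha}\big(1\wedge|y|^{-(d+\alpha)}\big)$ with $y=xt^{-\beta/\alpha}$ returns exactly the claimed $t^{-\beta d/\alpha}\wedge t^\beta|x|^{-(d+\alpha)}$.

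For the lower bound (a) I would restrict $J$ to a favourable piece of the $s$-axis in each regime: when $|x|\lesssim t^{\beta/\alpha}$ keep the range $s\asymp t^\beta$, where the minimum equals $s^{-d/\alpha}$, and bound below by $c\,t^{-\beta d/\alpha}$ using that $f_{E_t}$ carries a fixed fraction of its mass at scale $t^\beta$; when $|x|\gtrsim t^{\beta/\alpha}$ keep the range $s\lesssim|x|^\alpha$, where the minimum equals $s/|x|^{d+\alpha}$, and use the first moment $\int_0^\infty s\,f_{E_t}(s)\,\d s=t^\beta/\Gamma(1+\beta)$ (losing only a constant through truncation) to bound below by $c\,t^\beta|x|^{-(d+\alpha)}$. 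No integrability restriction is needed here, which is why (a) holds for all $\alpha\in(0,2]$.

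For the upper bound (b) I would split at the crossover,
$$J(t,x)=\frac{1}{|x|^{d+\alpha}}\int_0^{|x|^\alpha}s\,f_{E_t}(s)\,\d s+\int_{|x|^\alpha}^\infty s^{-d/\alpha}f_{E_t}(s)\,\d s,$$
and dominate each truncated integral by its full moment: the first by $\frac{1}{|x|^{d+\alpha}}\int_0^\infty s\,f_{E_t}(s)\,\d s=\frac{t^\beta}{\Gamma(1+\beta)|x|^{d+\alpha}}$, the second by $\int_0^\infty s^{-d/\alpha}f_{E_t}(s)\,\d s=\frac{\Gamma(1-d/\alpha)}{\Gamma(1-\beta d/\alpha)}\,t^{-\beta d/\alpha}$. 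The negative moment in the second term is finite precisely when $-d/\alpha>-1$, i.e. when $\alpha>d$, which is exactly the hypothesis of (b); this is the step that forces the extra condition. I would then check that in each of the two regimes one of these terms dominates and matches $t^{-\beta d/\alpha}\wedge t^\beta|x|^{-(d+\alpha)}$. The main obstacle is the bookkeeping at the crossover: one must verify that the two truncated moments really reproduce the minimum rather than merely their sum, and this is most transparent after the self-similar reduction to the profile $H$, where the comparison collapses to the single scale $|y|\asymp 1$.
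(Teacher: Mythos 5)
The paper itself does not prove this lemma: it is quoted from \cite{Foondun}, with only the remark that the proof uses the subordination formula $G_{\alpha,\beta}(t,x)=\int_0^\infty p(s,x)f_{E_t}(s)\,\d s$ together with the stable heat-kernel properties listed just before it. Your framework --- inserting $p(s,x)\asymp s^{-d/\alpha}\wedge s|x|^{-(d+\alpha)}$, reducing by self-similarity of $E_t$ to a universal profile, and invoking the Mittag--Leffler moment formulas --- is exactly that intended route, and your part (a) is correct: restricting the integral to $s\asymp t^\beta$ (resp.\ $s\lesssim|x|^\alpha$) and using positivity of the density (resp.\ the truncated first moment) produces the two branches of the minimum.

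Part (b), however, has a genuine gap. Dominating each truncated integral by its full moment gives
$$J(t,x)\;\leq\;\frac{t^\beta}{\Gamma(1+\beta)\,|x|^{d+\alpha}}\;+\;\frac{\Gamma(1-d/\alpha)}{\Gamma(1-\beta d/\alpha)}\,t^{-\beta d/\alpha},$$
and a sum of two positive terms is comparable to their \emph{maximum}, never their minimum: when $|x|\ll t^{\beta/\alpha}$ the first term blows up as $|x|\to 0$ while the claimed bound is $t^{-\beta d/\alpha}$, and when $|x|\gg t^{\beta/\alpha}$ the second term is of order $t^{-\beta d/\alpha}$, far larger than the claimed $t^\beta|x|^{-(d+\alpha)}$. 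So the verification you defer to (``one of these terms dominates and matches the minimum'') cannot succeed with these two bounds; discarding the truncation is precisely what loses the result. The repair keeps the truncation information and uses different moments in the two regimes. In the near regime $|x|\leq t^{\beta/\alpha}$ do not split at all: bound the integrand pointwise by $s^{-d/\alpha}$ and use the negative moment, $J(t,x)\leq \E[E_t^{-d/\alpha}]=\frac{\Gamma(1-d/\alpha)}{\Gamma(1-\beta d/\alpha)}t^{-\beta d/\alpha}$, which matches the minimum there; this is the \emph{only} place $\alpha>d$ is needed, so your identification of where the hypothesis enters is right in substance but attached to the wrong piece of the argument. In the far regime $|x|\geq t^{\beta/\alpha}$, split at $s=|x|^\alpha$ and bound \emph{both} pieces by the first moment: the piece over $[0,|x|^\alpha]$ exactly as you do, and the tail piece via the pointwise inequality $s^{-d/\alpha}\leq s|x|^{-(d+\alpha)}$, valid precisely on $s\geq|x|^\alpha$, so that
$$\int_{|x|^\alpha}^\infty s^{-d/\alpha}f_{E_t}(s)\,\d s\;\leq\;\frac{1}{|x|^{d+\alpha}}\int_0^\infty s\,f_{E_t}(s)\,\d s\;=\;\frac{t^\beta}{\Gamma(1+\beta)\,|x|^{d+\alpha}}.$$
With this rearrangement (negative moment only near, first moment only far) your scheme closes and reproduces $t^{-\beta d/\alpha}\wedge t^\beta|x|^{-(d+\alpha)}$ with constants depending only on $\beta$, $d/\alpha$.
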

\subsection{Homogeneous Fractional Poisson process}
 For the standard Poisson process $\{N(t)\}_{t\geq0}$ with intensity $\lambda>0$, the probability distribution satisfies the following difference-differential equation, see \cite{Beghin},\,\,\cite{Beghin1} and \cite{Silva},
 $$\frac{d}{d t}p(n,t)=-\lambda\big(p(n,t)-p(n-1,t)\big),\,\,\,n\geq 1,$$ with $p_n(0)=0$ if $n=0$ and is zero for $n\geq 1$. The solution is given by 
 $$p(n,t)=\P[N(t,\lambda)=n]=\frac{(\lambda t)^n e^{-\lambda t}}{n!}.$$ The waiting time distribution function for the process is given by $\phi(t)=\lambda \e^{-\lambda t},\,\,\lambda>0,\,\,t\geq0$ and its moment generating function given by $$\E[e^{s N(t)}]=\exp(\lambda t(\e^s-1)),\,\,\,s\in\R.$$ 
 \begin{definition}(Fractional Poisson process) Fractional Poisson process is a renewal process with inter-times between events represented by Mittag-Leffler distributions,\,\,see \cite{Beghin1},\,\,\cite{Biard} and \cite{Silva}. The fractional Poisson process $N^\nu(t),\,\,0<\nu\leq 1$ satisfies 
 \begin{eqnarray}D^\nu_t p_\nu(n,t)&=&-\lambda\big(p_\nu(n,t)-p_\nu(n-1,t)\big),\nonumber\\
 D^\nu_t p_\nu(0,t)&=&-\lambda p_\nu(0,t),
 \end{eqnarray} with $ p_\nu(n,0)=1$ if $n=0$ and zero for $n\geq 1$. The symbol $D^\nu_t$ denotes the fractional derivative in the sense of Caputo-Dzhrbashyan, defined by
  \begin{eqnarray*}
D^\nu_t f(t)=\left \{
\begin{array}{lll}
\frac{1}{\Gamma(1-\nu)}\int_0^t\frac{f'(s)}{(t-s)^\nu}d s,\,\,0<\nu<1,\\
\\
f'(t),\,\,\nu=1.
\end{array}\right.
\end{eqnarray*} The solution is given by $$ p_\nu(n,t)=\P[N^\nu(t)=n]=\frac{(\lambda t^\nu)^n}{n!}E_{\nu,1}^{(n)}(-\lambda t^\nu)=\frac{(\lambda t^\nu)^n}{n!} \sum_{k=0}^\infty \frac{(n+k)!}{k!}\frac{(-\lambda t^\nu)^k}{\Gamma\big(\nu(k+n)+1\big)}.$$ Its waiting time distribution function is given by $\phi_\nu(t)=\lambda t^{\nu-1}E_{\nu,1}(-\lambda t^\nu)$ where
 $$E_{\alpha,\beta}(z)=\sum_{k=0}^\infty\frac{z^k}{\Gamma(k\alpha+\beta)},\,\,\alpha,\beta\in\C,\,\mathcal{R} (\alpha),\mathcal{R}(\beta)>0,\,\,z\in\R,$$ is the Mittag-Leffler function. 
   \end{definition}
 \begin{theorem}\label{theorem:homo-frac}\cite{Maheshwari}
   Consider the fractional Poisson process $\{N^\nu(t)\}_{t\geq0},\,\,\nu\in(0,1]$. The moment generating function of the process $N^\nu(t)$ can be expressed as follows:
 $$\E[e^{s N^\nu(t)}]=E_{\nu,1}(\lambda(e^s-1)t^\nu),\,\,\,s\in\R.$$  The mean and the variance of $N^\nu(t)$ are given by 
  $$\E[N^\nu(t)]=\frac{\lambda t^\nu}{\Gamma(\nu+1)},\,\,\,\,\,
 Var[N^\nu(t)]=\frac{2(\lambda t^\nu)^2}{\Gamma(2\nu+1)}-\frac{(\lambda t^\nu)^2}{\Gamma^2(\nu+1)}+\frac{\lambda t^\nu}{\Gamma(\nu+1)}.$$
  In general, the $p$th order moment of the fractional process is given by
 $$\E[N^\nu(t)]^p=\sum_{k=0}^\infty S_\nu(p,k)(\lambda t^\nu)^k,$$ where $S_\nu(p,k)$ is a fractional Stirling number.
   \end{theorem}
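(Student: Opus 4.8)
The plan is to work directly from the probability mass function $p_\nu(n,t)=\P[N^\nu(t)=n]=\frac{(\lambda t^\nu)^n}{n!}\sum_{k=0}^\infty\frac{(n+k)!}{k!}\frac{(-\lambda t^\nu)^k}{\Gamma(\nu(k+n)+1)}$ recorded in the preceding definition, and to derive every assertion — moment generating function, mean, variance, and the general $p$th moment — as a consequence of the single closed form for the m.g.f. First I would abbreviate $z:=\lambda t^\nu$ and form $\E[e^{sN^\nu(t)}]=\sum_{n=0}^\infty e^{sn}p_\nu(n,t)$, inserting the double-series expression for $p_\nu(n,t)$.

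The central computation is a resummation. After substituting the series I would reindex the double sum by the total degree $m=n+k$, so that for fixed $m$ the index $n$ runs from $0$ to $m$ and $k=m-n$. The factor $\Gamma(\nu(k+n)+1)=\Gamma(\nu m+1)$ then depends only on $m$ and pulls out of the inner sum, leaving
$$\E[e^{sN^\nu(t)}]=\sum_{m=0}^\infty\frac{m!}{\Gamma(\nu m+1)}\sum_{n=0}^m\frac{(e^s z)^n}{n!}\frac{(-z)^{m-n}}{(m-n)!}.$$
The inner sum collapses by the binomial theorem to $\frac{1}{m!}(e^s z-z)^m=\frac{1}{m!}\big(z(e^s-1)\big)^m$, the factors $m!$ cancel, and what remains is precisely $\sum_{m=0}^\infty\frac{(z(e^s-1))^m}{\Gamma(\nu m+1)}=E_{\nu,1}\big(\lambda(e^s-1)t^\nu\big)$, the claimed moment generating function.

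With the m.g.f. in hand, the lower moments follow by differentiation at $s=0$, using the term-by-term values $E_{\nu,1}^{(k)}(0)=\frac{k!}{\Gamma(k\nu+1)}$ read off from the power series. Writing $g(s):=\lambda t^\nu(e^s-1)$, one has $g(0)=0$ and $g^{(j)}(0)=\lambda t^\nu$ for every $j\ge1$. A single application of the chain rule gives $\E[N^\nu(t)]=E_{\nu,1}'(0)\,g'(0)=\frac{\lambda t^\nu}{\Gamma(\nu+1)}$, and two applications give $\E[N^\nu(t)^2]=E_{\nu,1}''(0)g'(0)^2+E_{\nu,1}'(0)g''(0)=\frac{2(\lambda t^\nu)^2}{\Gamma(2\nu+1)}+\frac{\lambda t^\nu}{\Gamma(\nu+1)}$; subtracting the square of the mean yields the stated variance.

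For the general $p$th moment I would differentiate the composition $E_{\nu,1}\circ g$ exactly $p$ times by Faà di Bruno's formula and evaluate at $s=0$. Because all derivatives $g^{(j)}(0)$ share the common value $\lambda t^\nu$, the Bell polynomials $B_{p,k}(g'(0),\dots)$ reduce to $(\lambda t^\nu)^k B_{p,k}(1,\dots,1)=(\lambda t^\nu)^k S(p,k)$, with $S(p,k)$ the ordinary Stirling numbers of the second kind; combining with $E_{\nu,1}^{(k)}(0)=k!/\Gamma(k\nu+1)$ produces $\E[N^\nu(t)^p]=\sum_{k=0}^p\big(\tfrac{k!}{\Gamma(k\nu+1)}S(p,k)\big)(\lambda t^\nu)^k$, which identifies the fractional Stirling number as $S_\nu(p,k)=\frac{k!}{\Gamma(k\nu+1)}S(p,k)$ (reducing to $S(p,k)$ at $\nu=1$, consistent with the classical Poisson case). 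The main obstacle is analytic rather than algebraic: one must justify the interchange of the two infinite summations in the resummation step and the term-by-term differentiation. I would handle this by verifying absolute convergence of the rearranged double series for each fixed $s$ — the Mittag-Leffler series being entire is what supplies the needed control — and by invoking uniform convergence on a neighbourhood of $s=0$ to legitimise differentiating under the summation sign.
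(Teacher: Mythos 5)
Your proposal is correct, but there is nothing in the paper to compare it against: the statement is imported verbatim, with the citation \cite{Maheshwari}, and the paper gives no proof at all (the mean and variance formulas go back to Laskin \cite{Laskin}); the result is simply used downstream, e.g.\ in the proof of Lemma \ref{lem:frac}. What you have done is reconstruct the standard derivation, and your reconstruction is sound. The resummation over the diagonal $m=n+k$ is legitimate because the same rearrangement applied to absolute values gives $\sum_{m}\left(z(e^{s}+1)\right)^{m}/\Gamma(\nu m+1)=E_{\nu,1}\left(z(e^{s}+1)\right)<\infty$, so Tonelli's theorem for double series applies; the binomial collapse of the inner sum to $\left(z(e^{s}-1)\right)^{m}/m!$ is exact; and the resulting m.g.f.\ $E_{\nu,1}\left(\lambda(e^{s}-1)t^{\nu}\right)$ is finite for all real $s$ and analytic, so moments really are derivatives at $s=0$. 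Your chain-rule computations with $E_{\nu,1}^{(k)}(0)=k!/\Gamma(k\nu+1)$ reproduce the stated mean and variance, and the Fa\`a di Bruno step, using the homogeneity $B_{p,k}(z,\dots,z)=z^{k}B_{p,k}(1,\dots,1)=z^{k}S(p,k)$, yields $\E\left[(N^{\nu}(t))^{p}\right]=\sum_{k=0}^{p}\frac{k!}{\Gamma(k\nu+1)}S(p,k)(\lambda t^{\nu})^{k}$. This buys something the bare citation does not: it identifies the otherwise undefined ``fractional Stirling number'' explicitly as $S_{\nu}(p,k)=\frac{k!}{\Gamma(k\nu+1)}S(p,k)$ (reducing to $S(p,k)$ at $\nu=1$), and it explains why the sum written ``to $\infty$'' in the statement is in fact finite, since these coefficients vanish for $k>p$. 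One cosmetic point: the theorem writes $\E[N^{\nu}(t)]^{p}$, an evident typo for the $p$th moment $\E\left[(N^{\nu}(t))^{p}\right]$, which you interpreted correctly.
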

 \subsection{Non-homogeneous fractional Poisson process}  The non-homogeneous fractional Poisson process is obtained by replacing the time variable in the fractional Poisson process of renewable type with an appropriate function of time - $\Lambda(t)$. 
 \begin{definition}(Non-homogeneous Poisson process) A counting process $\{N_\lambda(t)\}_{t\geq0}$ is said to be a non-homogeneous Poisson process with intensity function $\lambda(t):[0,\infty)\rightarrow[0,\infty)$ if $$N_\lambda(t)=N(\Lambda(t)),\,\,t\geq0.$$
  The non-homogeneous Poisson process is specified either by its intensity function $\lambda(t)$ or more generally by its expectation function $\Lambda(t)=\E[N_\lambda(t)]$. When the intensity function $\lambda(t)$ exists, one denotes $$\Lambda(t,s)=\int_s^t\lambda(y)\d y,$$ where the  function $\Lambda(t)=\Lambda(0,t)$ is known as the rate function or cumulative rate function. The stochastic process $N_\lambda(t)$ has an independent but not necessarily stationary increments: let $0\leq s<t$, then the Poisson marginal distributions of $N_\lambda$ is given by $$\P[N_\lambda(t+s)-N_\lambda(t)=n]=\frac{e^{-(\Lambda(t+s)-\Lambda(t))}(\Lambda(t+s)-\Lambda(t))^n}{n!},\,\,n\in\Z_+.$$
  \end{definition}
 \begin{remark} The following are some examples of rate functions:
 \begin{itemize}
 \item Weibull's rate function: $$\Lambda(t)=\bigg(\frac{t}{b}\bigg)^a,\,\,\lambda(t)=\frac{a}{b}\bigg(\frac{t}{b}\bigg)^{a-1},\,\,a\geq0,\,b>0,$$
 \item Gompertz's rate function: $$\Lambda(t)=\frac{a}{b}e^{b t}-\frac{a}{b},\,\,\lambda(t)=a e^{b t},\,a,b>0,$$
 \item Makeham's rate function: $$\Lambda(t)=\frac{a}{b}e^{b t}-\frac{a}{b}+\mu t,\,\,\lambda(t)=a e^{b t}+\mu,\,\,a>0,\,b>0,\,\mu\geq0.$$
 \end{itemize}
 \end{remark}
 \begin{definition}(Non-homogeneous fractional Poisson process) The non-homogeneous fractional Poisson process is defined as $$N_\lambda^\nu(t)=N^\nu(\Lambda(t)),\,\,t\geq0,\,\,0<\nu\leq 1,$$ where $N^\nu(t)$ is the fractional Poisson process and $\Lambda(t)$ is the rate (or cumulative rate) function.
 \end{definition}
\noindent
One observes that when $\lambda(t)=\lambda^{1/\nu},\,\,t\geq0$ then $\Lambda(t)=\lambda^{1/\nu} t$ and the non-homogeneous fractional Poisson process easily gives the fractional Poisson process. The probability mass function of the non-homogeneous fractional Poisson process is given by
 $$p_\nu(n,\Lambda(t))=\P[N_\lambda^\nu(\Lambda(t))=n]=\frac{(\Lambda(t))^{n\nu}}{n!} \sum_{k=0}^\infty \frac{(n+k)!}{k!}\frac{(-\Lambda(t)^\nu)^k}{\Gamma\big(\nu(k+n)+1\big)}.$$
  \begin{theorem}\label{theorem:non-homo-frac}\cite{Maheshwari} Let $0<s\leq t<\infty,\,\,\,q=1/\Gamma(1+\nu)$ and $d=\nu q^2 B(\nu,1+\nu)$ then the mean and variance of the process $N_\lambda^\nu(t)$ are given by
 $$\E[N_\lambda^\nu(t)]=q\Lambda^\nu(t),\,\,Var[N_\lambda^\nu(t)]=q\Lambda^\nu(t)\big(1-q\Lambda^\nu(t)\big)+2 d \Lambda^{2\nu}(t),$$ where $B(a,b)$ is a Bessel function.
 \end{theorem}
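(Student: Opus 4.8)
The plan is to obtain both formulas directly from the definition $N_\lambda^\nu(t)=N^\nu(\Lambda(t))$ by a purely \emph{deterministic} time change, reducing everything to the homogeneous statistics already recorded in Theorem~\ref{theorem:homo-frac}. Since $\Lambda(t)$ is a fixed (non-random) function of time, no conditioning or independence argument is needed: for each fixed $t$ the variable $N_\lambda^\nu(t)$ is exactly the homogeneous fractional Poisson variable $N^\nu(\cdot)$ evaluated at the deterministic argument $\Lambda(t)$. Here the base process is normalized to unit intensity, the normalization forced by the remark preceding the statement, since the specialization $\Lambda(t)=\lambda^{1/\nu}t$ must recover the intensity-$\lambda$ fractional Poisson process and indeed gives $q(\lambda^{1/\nu}t)^\nu=q\lambda t^\nu=\lambda t^\nu/\Gamma(1+\nu)$, matching the homogeneous mean. (The parameter $s$ in the hypothesis is vestigial for a single-time mean and variance; it would only enter a covariance computation.)

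First I would substitute $t\mapsto\Lambda(t)$ (with $\lambda=1$) into the mean formula of Theorem~\ref{theorem:homo-frac}, which immediately yields $\E[N_\lambda^\nu(t)]=\Lambda^\nu(t)/\Gamma(1+\nu)=q\,\Lambda^\nu(t)$, the first claim. Performing the same substitution in the variance formula of Theorem~\ref{theorem:homo-frac} produces
\[
Var[N_\lambda^\nu(t)]=\frac{2\,\Lambda^{2\nu}(t)}{\Gamma(2\nu+1)}-\frac{\Lambda^{2\nu}(t)}{\Gamma^2(\nu+1)}+\frac{\Lambda^\nu(t)}{\Gamma(\nu+1)}.
\]
It then remains only to recast this into the stated form $q\Lambda^\nu(t)\big(1-q\Lambda^\nu(t)\big)+2d\,\Lambda^{2\nu}(t)$.

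The only genuine content is this final algebraic matching. Expanding the target gives $q\Lambda^\nu(t)-q^2\Lambda^{2\nu}(t)+2d\,\Lambda^{2\nu}(t)$; the last two surviving terms of the substituted variance are exactly $q\Lambda^\nu(t)-q^2\Lambda^{2\nu}(t)$, so everything reduces to verifying $2d=2/\Gamma(2\nu+1)$. With $d=\nu q^2 B(\nu,1+\nu)$ and the Euler Beta--Gamma identity $B(\nu,1+\nu)=\Gamma(\nu)\Gamma(1+\nu)/\Gamma(1+2\nu)$, one computes $\nu q^2 B(\nu,1+\nu)=\nu\Gamma(\nu)/\big[\Gamma(1+\nu)\Gamma(1+2\nu)\big]$, and the recurrence $\nu\Gamma(\nu)=\Gamma(1+\nu)$ collapses this to $1/\Gamma(1+2\nu)$, confirming the identity. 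I would flag here that the object $B(a,b)$ in the statement is the \emph{Beta} function, not a Bessel function as written; the definition of $d$ is numerically consistent only under this reading.

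The main (and essentially only) obstacle is this last verification, which is routine once the Beta--Gamma reduction is spotted; there is no probabilistic difficulty precisely because the time change is deterministic, so moments pass through the composition unchanged. To be fully rigorous about legitimacy of the substitution, I would remark that measurability and non-negativity of $t\mapsto\Lambda(t)$ together with the finiteness of the first two homogeneous moments guarantee that $N^\nu(\Lambda(t))$ is well defined with $\E[N_\lambda^\nu(t)]<\infty$ and $Var[N_\lambda^\nu(t)]<\infty$, so the formal evaluation of Theorem~\ref{theorem:homo-frac} at the argument $\Lambda(t)$ is justified.
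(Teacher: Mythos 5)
Your proof is correct, and there is nothing in the paper to compare it against: the statement is imported verbatim from the cited reference \cite{Maheshwari} with no proof given in this paper. Your route --- treating $\Lambda(t)$ as a deterministic time change so that the marginal of $N_\lambda^\nu(t)$ is that of the unit-intensity homogeneous process at time $\Lambda(t)$, substituting into Theorem \ref{theorem:homo-frac}, and closing the gap with $2d=2/\Gamma(2\nu+1)$ via $B(\nu,1+\nu)=\Gamma(\nu)\Gamma(1+\nu)/\Gamma(1+2\nu)$ and $\nu\Gamma(\nu)=\Gamma(1+\nu)$ --- is the standard derivation, and your observation that $B(a,b)$ must be read as the Beta function (the paper's ``Bessel function'' is a misprint) is also correct and necessary for the constant $d$ to come out right.
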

\noindent
 We now return to equation \eqref{eqn:white} and compute the expectation of $\mathcal{N}^{1-\vartheta,\nu}(t)$ for the fractional Poisson process $N^\nu(t)$ and $\mathcal{N}^{1-\vartheta,\nu}_\lambda(t)$ for the non-homogeneous fractional Poisson process using some specific rate functions.
 \begin{lemma}\label{lem:frac}
 Consider the Riemann-Liouville fractional integral process $\mathcal{N}^{1-\vartheta,\nu}(t)$, $0<1-\vartheta<1$ and $0<\nu\leq 1$, then we have 
 \begin{equation*}
 \E[D^\vartheta_t N^\nu(t)]=\E[\frac{\d}{\d t}\mathcal{N}^{1-\vartheta,\nu}(t)]=\frac{\lambda t^{\nu-\vartheta}}{\Gamma(1+\nu-\vartheta)}.
 \end{equation*}
 \end{lemma}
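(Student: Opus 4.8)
The plan is to avoid differentiating the singular kernel representation directly and instead to compute the mean of the fractional integral process as an explicit deterministic function of $t$, and only then differentiate. The key inputs are linearity of expectation, Tonelli's theorem (which applies since $N^\nu$ is a nonnegative process), the Beta-integral evaluation, and the mean formula $\E[N^\nu(s)]=\lambda s^\nu/\Gamma(\nu+1)$ supplied by Theorem \ref{theorem:homo-frac}.

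First I would write out the fractional integral process explicitly,
$$\mathcal{N}^{1-\vartheta,\nu}(t)=I_t^{1-\vartheta}N^\nu(t)=\frac{1}{\Gamma(1-\vartheta)}\int_0^t (t-s)^{-\vartheta}N^\nu(s)\,\d s,$$
and take expectation. Because $N^\nu(s)\geq 0$ and the kernel $(t-s)^{-\vartheta}$ is nonnegative and locally integrable on $[0,t]$ (here the exponent $-\vartheta>-1$ is crucial), Tonelli's theorem permits interchanging $\E$ with the integral. Substituting the mean of $N^\nu(s)$ then gives
$$\E[\mathcal{N}^{1-\vartheta,\nu}(t)]=\frac{\lambda}{\Gamma(1-\vartheta)\Gamma(\nu+1)}\int_0^t (t-s)^{-\vartheta}s^\nu\,\d s.$$

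Next I would evaluate the remaining integral as a Beta integral. After the substitution $s=tu$ one obtains $\int_0^t(t-s)^{-\vartheta}s^\nu\,\d s=t^{1-\vartheta+\nu}B(1-\vartheta,\nu+1)$, and using $B(1-\vartheta,\nu+1)=\Gamma(1-\vartheta)\Gamma(\nu+1)/\Gamma(2-\vartheta+\nu)$ the two Gamma factors cancel against the prefactor, leaving
$$\E[\mathcal{N}^{1-\vartheta,\nu}(t)]=\frac{\lambda\,t^{1-\vartheta+\nu}}{\Gamma(2-\vartheta+\nu)}.$$
Finally, since this is a smooth power of $t$ for $t>0$, I differentiate termwise and invoke $\Gamma(2-\vartheta+\nu)=(1-\vartheta+\nu)\Gamma(1-\vartheta+\nu)$, so that the factor $(1-\vartheta+\nu)$ cancels and produces $\E[\frac{\d}{\d t}\mathcal{N}^{1-\vartheta,\nu}(t)]=\lambda t^{\nu-\vartheta}/\Gamma(1+\nu-\vartheta)$, as claimed.

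The one point requiring care is the interchange of $\E$ with the time-derivative $\frac{\d}{\d t}$. Rather than differentiating under the expectation of the singular kernel $(t-s)^{-\vartheta-1}$ appearing in the Remark (which fails to be locally integrable near $s=t$), the route above sidesteps the difficulty entirely: I first produce $\E[\mathcal{N}^{1-\vartheta,\nu}(t)]$ as an honest, continuously differentiable function of $t$, and only afterward differentiate. The identity $\frac{\d}{\d t}\E[\mathcal{N}^{1-\vartheta,\nu}(t)]=\E[\frac{\d}{\d t}\mathcal{N}^{1-\vartheta,\nu}(t)]$ is then legitimate because the explicit power-law formula for the mean is smooth on $(0,\infty)$. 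This is the main obstacle, and it is resolved by the order of operations rather than by a delicate dominated-convergence argument against the singular kernel.
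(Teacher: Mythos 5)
Your route is genuinely different from the paper's, and up to the final step it is cleaner. The paper's own proof pushes $\E$ inside the singular-kernel representation from the Remark in the Introduction, $D^\vartheta_t N^\nu(t)=\frac{-\vartheta}{\Gamma(1-\vartheta)}\int_0^t(t-s)^{-\vartheta-1}N^\nu(s)\,\d s$, and then evaluates $\int_0^t(t-s)^{-\vartheta-1}s^\nu\,\d s$ as $t^{\nu-\vartheta}\,\Gamma(-\vartheta)\Gamma(\nu+1)/\Gamma(1+\nu-\vartheta)$, after which the claimed formula follows from $-\vartheta\,\Gamma(-\vartheta)=\Gamma(1-\vartheta)$. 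As you note, that kernel is not locally integrable near $s=t$: the integral the paper evaluates is actually divergent, and the displayed value is its analytic continuation (a finite-part regularization), matching the infinite boundary term that was silently dropped when the Leibniz rule was applied in the Remark. Your order of operations --- first compute $\E[\mathcal{N}^{1-\vartheta,\nu}(t)]=\lambda t^{1+\nu-\vartheta}/\Gamma(2+\nu-\vartheta)$ via Tonelli and a convergent Beta integral, then differentiate --- involves only convergent integrals, and that part of your argument is fully rigorous.

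However, the step you yourself call the main obstacle is not resolved by the justification you give, and that justification is false as a general principle. Smoothness of the deterministic function $t\mapsto\E[\mathcal{N}^{1-\vartheta,\nu}(t)]$ cannot legitimize $\E[\frac{\d}{\d t}\mathcal{N}^{1-\vartheta,\nu}(t)]=\frac{\d}{\d t}\E[\mathcal{N}^{1-\vartheta,\nu}(t)]$: the left side is the expectation of an almost-sure limit of difference quotients, and passing a pathwise limit through $\E$ requires uniform integrability or a monotone/dominated convergence argument, not regularity of the mean. The fractional Poisson process itself refutes your reasoning: its paths are step functions, so its pathwise derivative vanishes for a.e.\ $t$ almost surely, yet its mean $\lambda t^{\nu}/\Gamma(\nu+1)$ is smooth with nonzero derivative --- expectation and $\frac{\d}{\d t}$ simply do not commute for it, despite the mean being as smooth as one could wish. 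The interchange is valid for $\mathcal{N}^{1-\vartheta,\nu}$ only because $I^{1-\vartheta}_t$ smooths the paths, and that is what a correct proof must exploit. Concretely: writing $T_1<T_2<\cdots$ for the jump times of $N^\nu$, one has the pathwise identity $\mathcal{N}^{1-\vartheta,\nu}(t)=\frac{1}{\Gamma(2-\vartheta)}\sum_k(t-T_k)_+^{1-\vartheta}$, hence $\frac{\d}{\d t}\mathcal{N}^{1-\vartheta,\nu}(t)=\frac{1}{\Gamma(1-\vartheta)}\sum_{T_k<t}(t-T_k)^{-\vartheta}$ a.s. For the difference quotients, the terms with $T_k\le t$ increase monotonically as $h\downarrow 0$ (concavity of $x\mapsto x^{1-\vartheta}$), so monotone convergence handles them, while the contribution of jumps in $(t,t+h]$ is at most $\big(N^\nu(t+h)-N^\nu(t)\big)h^{-\vartheta}$, whose expectation is $O(h^{1-\vartheta})\to 0$. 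Alternatively, avoid the interchange altogether by applying the first-moment (Campbell-type) formula $\E\sum_{T_k<t}f(t-T_k)=\int_0^t f(t-s)\,\d m(s)$, with $m(s)=\E[N^\nu(s)]=\lambda s^\nu/\Gamma(\nu+1)$, directly to the pathwise derivative; this yields $\E[\frac{\d}{\d t}\mathcal{N}^{1-\vartheta,\nu}(t)]=\frac{\lambda\nu}{\Gamma(1-\vartheta)\Gamma(\nu+1)}\int_0^t(t-s)^{-\vartheta}s^{\nu-1}\,\d s=\frac{\lambda t^{\nu-\vartheta}}{\Gamma(1+\nu-\vartheta)}$ with no interchange needed. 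With either repair your proof is complete, and strictly more rigorous than the paper's.
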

 \begin{proof}From Theorem \ref{theorem:homo-frac}, we have that
 \begin{eqnarray*}
\E[D^\vartheta_t N^\nu(t)]&=& \E[\frac{\d}{\d t}\mathcal{N}^{1-\vartheta,\nu}(t)]\\&=&\frac{-\vartheta}{\Gamma(1-\vartheta)}\int_0^t(t-s)^{-\vartheta-1}\E[N^\nu(s)]\d s\\
 &=&\frac{-\beta}{\Gamma(1-\vartheta)}\int_0^t(t-s)^{-\vartheta-1}\frac{\lambda s^\nu}{\Gamma(\nu+1)}\d s\\
 &=&\frac{-\vartheta\lambda}{\Gamma(1-\vartheta)\Gamma(\nu+1)}\int_0^t(t-s)^{-\vartheta-1} s^\nu\d s\\
  &=&\frac{-\vartheta\lambda}{\Gamma(1-\vartheta)\Gamma(\nu+1)} \frac{t^{\nu-\vartheta} \Gamma(-\vartheta)\Gamma(\nu+1)}{\Gamma(1+\nu-\vartheta)}.
 \end{eqnarray*}
 \end{proof}
  \begin{lemma}Consider the Riemann-Liouville fractional integral process \\$\mathcal{N}^{1-\vartheta,\nu}(t),\,\,\vartheta<1,\,0<\nu\leq 1$, we have
 $$ Var[\frac{\d}{\d t}\mathcal{N}^{1-\vartheta,\nu}(t)]=\frac{\lambda t^{\nu-\vartheta}}{\Gamma(1+\nu-\vartheta)}+\frac{\lambda^2}{\Gamma(1+ 2\nu-\vartheta)}\bigg\{2-\frac{\Gamma(1 + 2 \nu)}{\Gamma^2(\nu+1)}\bigg\}t^{2 \nu-\vartheta}.  $$
 \end{lemma}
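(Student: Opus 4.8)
The plan is to compute the variance exactly as the mean was handled in Lemma \ref{lem:frac}: apply the Riemann--Liouville operator $\frac{\d}{\d t}\mathcal{N}^{1-\vartheta,\nu}(t)=D^\vartheta_t$ directly to the variance function of the ordinary fractional Poisson process and then reduce everything to fractional derivatives of powers of $t$. First I would take the expression for $Var[N^\nu(t)]$ supplied by Theorem \ref{theorem:homo-frac} and regroup it as a sum of two monomials in $t$,
\[
Var[N^\nu(t)]=\frac{\lambda}{\Gamma(\nu+1)}\,t^{\nu}+\lambda^2\Big(\frac{2}{\Gamma(2\nu+1)}-\frac{1}{\Gamma^2(\nu+1)}\Big)t^{2\nu},
\]
so that the two bracketed coefficients are constants and the only $t$-dependence sits in $t^{\nu}$ and $t^{2\nu}$. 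This is the form on which the operator acts cleanly.

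The computational engine is the Riemann--Liouville fractional derivative of a power, which I would record once via the Beta integral, understood by analytic continuation in its first argument (exactly as the factor $\Gamma(-\vartheta)$ arises at the end of Lemma \ref{lem:frac}). For a power $s^{p}$,
\[
\frac{-\vartheta}{\Gamma(1-\vartheta)}\int_0^t(t-s)^{-\vartheta-1}s^{p}\,\d s=\frac{-\vartheta}{\Gamma(1-\vartheta)}\,t^{p-\vartheta}\,\frac{\Gamma(-\vartheta)\Gamma(p+1)}{\Gamma(p+1-\vartheta)}=\frac{\Gamma(p+1)}{\Gamma(p+1-\vartheta)}\,t^{p-\vartheta},
\]
where the last equality uses the recursion $\Gamma(1-\vartheta)=-\vartheta\,\Gamma(-\vartheta)$. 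Applying this identity with $p=\nu$ and with $p=2\nu$ converts $t^{\nu}\mapsto \frac{\Gamma(\nu+1)}{\Gamma(1+\nu-\vartheta)}\,t^{\nu-\vartheta}$ and $t^{2\nu}\mapsto \frac{\Gamma(2\nu+1)}{\Gamma(1+2\nu-\vartheta)}\,t^{2\nu-\vartheta}$.

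Substituting these into the regrouped variance, the $\Gamma(\nu+1)$ and $\Gamma(2\nu+1)$ factors cancel against the coefficients: the first term collapses to $\frac{\lambda t^{\nu-\vartheta}}{\Gamma(1+\nu-\vartheta)}$, and the second becomes $\frac{\lambda^2}{\Gamma(1+2\nu-\vartheta)}\big(2-\frac{\Gamma(1+2\nu)}{\Gamma^2(\nu+1)}\big)t^{2\nu-\vartheta}$, which is exactly the asserted identity. The one genuinely delicate point — and the step I would take care to justify — is the interchange $Var\big[D^\vartheta_t N^\nu(t)\big]=D^\vartheta_t\,Var[N^\nu(t)]$: unlike the expectation in Lemma \ref{lem:frac}, the variance is a quadratic functional, so commuting it past the signed Riemann--Liouville averaging is not automatic. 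I would either adopt this as the working convention for the variance of the integral process (consistent with the treatment of the mean above), or, for a fully rigorous route, expand $Var$ through the covariance kernel $Cov[N^\nu(s_1),N^\nu(s_2)]$ of the fractional Poisson process and evaluate the resulting double Beta-type integral over $[0,t]^2$. The former is the approach matching the stated constants, while the latter is where essentially all of the difficulty would lie.
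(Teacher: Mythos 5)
Your proposal follows essentially the same route as the paper: the paper likewise takes as its first line the identity $Var[\frac{\d}{\d t}\mathcal{N}^{1-\vartheta,\nu}(t)]=\frac{-\vartheta}{\Gamma(1-\vartheta)}\int_0^t(t-s)^{-\vartheta-1}Var[N^\nu(s)]\,\d s$, splits $Var[N^\nu(s)]$ into its $s^\nu$ and $s^{2\nu}$ monomials, and evaluates the resulting Beta-type integrals with the analytically continued factor $\Gamma(-\vartheta)$, exactly as in your power-rule reduction. The ``delicate point'' you flag --- that commuting the (quadratic) variance functional past the signed Riemann--Liouville averaging is not automatic --- is genuine, but the paper performs that interchange silently as its opening step, so relative to the paper's own standard your argument is complete and, if anything, the more careful of the two.
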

 \begin{proof}Also from Theorem \ref{theorem:homo-frac}, it follows that
 \begin{eqnarray*}
 Var[\frac{\d}{\d t}\mathcal{N}^{1-\vartheta,\nu}(t)]&=&\frac{-\vartheta}{\Gamma(1-\vartheta)}\int_0^t(t-s)^{-\vartheta-1}Var[N^\nu(s)]\d s\\
 &=&\frac{-\vartheta}{\Gamma(1-\vartheta)}\int_0^t(t-s)^{-\vartheta-1}\\&\times&\bigg\{\frac{2(\lambda s^\nu)^2}{\Gamma(2\nu+1)}-\frac{(\lambda s^\nu)^2}{\Gamma^2(\nu+1)}+\frac{\lambda s^\nu}{\Gamma(\nu+1)}\bigg\}\d s\\
 &=&\frac{\lambda t^{\nu-\vartheta}}{\Gamma(1+\nu-\vartheta)}\\&+&\frac{-\vartheta\lambda^2}{\Gamma(1-\vartheta)}\bigg\{\frac{2}{\Gamma(2\nu+1)}-\frac{1}{\Gamma^2(\nu+1)}\bigg\}\int_0^t(t-s)^{-\vartheta-1} s^{2\nu}\d s\\
  &=&\frac{\lambda t^{\nu-\vartheta}}{\Gamma(1+\nu-\vartheta)}+\frac{-\vartheta\lambda^2}{\Gamma(1-\vartheta)}\bigg\{\frac{2}{\Gamma(2\nu+1)}-\frac{1}{\Gamma^2(\nu+1)}\bigg\}\\&\times&t^{2\nu - \vartheta} \frac{\Gamma(1 + 2 \nu) \Gamma(-\vartheta)}{\Gamma(1+ 2\nu-\vartheta)}
 \end{eqnarray*}
 \end{proof}
 \begin{lemma} For the Weibull's rate function $\Lambda(t)=\big(\frac{t}{b}\big)^a$ and $\vartheta<1$:
 \begin{eqnarray*}
 \E[\frac{\d}{\d t}\mathcal{N}^{1-\vartheta,\nu}_\lambda(t)]&=&\frac{b^{-a\nu}}{\Gamma(\nu+1)}\frac{t^{a\nu-\vartheta}\Gamma(1+a\nu)}{\Gamma(1-\vartheta+a\nu)},\\Var[\frac{\d}{\d t}\mathcal{N}^{1-\vartheta,\nu}_\lambda(t)]&=& \E[\frac{\d}{\d t}\mathcal{N}^{1-\vartheta,\nu}_\lambda(t)]+(2d-q^2)b^{-2a\nu}\frac{t^{2a\nu-\vartheta}\Gamma(1+2a\nu)}{\Gamma(1-\vartheta+2a\nu)},
 \end{eqnarray*} with $q$ and $d$ as given in Theorem \ref{theorem:non-homo-frac}.
 \end{lemma}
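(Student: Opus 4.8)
The plan is to follow verbatim the template set by the two immediately preceding lemmas, now feeding in the non-homogeneous first and second moments from Theorem \ref{theorem:non-homo-frac} specialised to the Weibull rate function $\Lambda(s)=(s/b)^a$, so that $\Lambda^\nu(s)=b^{-a\nu}s^{a\nu}$ and $\Lambda^{2\nu}(s)=b^{-2a\nu}s^{2a\nu}$. First I would invoke the representation obtained in the Remark, applied to $N^\nu_\lambda$ in place of $N^\nu_\lambda$, namely
$$\frac{\d}{\d t}\mathcal{N}^{1-\vartheta,\nu}_\lambda(t)=\frac{-\vartheta}{\Gamma(1-\vartheta)}\int_0^t(t-s)^{-\vartheta-1}N^\nu_\lambda(s)\,\d s,$$
take expectations and pass $\E$ through the integral, and substitute $\E[N^\nu_\lambda(s)]=q\Lambda^\nu(s)=q\,b^{-a\nu}s^{a\nu}$ with $q=1/\Gamma(1+\nu)$.

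The core computation is the generalised Beta integral $\int_0^t(t-s)^{-\vartheta-1}s^{\gamma}\,\d s=t^{\gamma-\vartheta}\,\Gamma(-\vartheta)\Gamma(\gamma+1)/\Gamma(1-\vartheta+\gamma)$, used here (as in the previous two lemmas) in the analytically continued sense. Taking $\gamma=a\nu$ and then cancelling the prefactor via the Gamma recursion $-\vartheta\,\Gamma(-\vartheta)=\Gamma(1-\vartheta)$, which gives $\dfrac{-\vartheta\,\Gamma(-\vartheta)}{\Gamma(1-\vartheta)}=1$, collapses the expression to
$$\E\Big[\tfrac{\d}{\d t}\mathcal{N}^{1-\vartheta,\nu}_\lambda(t)\Big]=q\,b^{-a\nu}\,t^{a\nu-\vartheta}\frac{\Gamma(1+a\nu)}{\Gamma(1-\vartheta+a\nu)}=\frac{b^{-a\nu}}{\Gamma(\nu+1)}\frac{t^{a\nu-\vartheta}\Gamma(1+a\nu)}{\Gamma(1-\vartheta+a\nu)},$$
which is the claimed mean. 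For the variance I would again follow the homogeneous lemma and write $Var$ under the same integral kernel, substitute $Var[N^\nu_\lambda(s)]=q\Lambda^\nu(s)\big(1-q\Lambda^\nu(s)\big)+2d\,\Lambda^{2\nu}(s)$, and regroup the two monomials in $s$: the linear part $q\,b^{-a\nu}s^{a\nu}$ reproduces exactly the mean already computed, while the quadratic part $(2d-q^2)b^{-2a\nu}s^{2a\nu}$ is handled by the same Beta integral with $\gamma=2a\nu$. The identity $-\vartheta\,\Gamma(-\vartheta)=\Gamma(1-\vartheta)$ again clears the prefactor and delivers $(2d-q^2)b^{-2a\nu}\,t^{2a\nu-\vartheta}\Gamma(1+2a\nu)/\Gamma(1-\vartheta+2a\nu)$, so the two contributions assemble into the stated formula.

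The genuinely delicate points are not algebraic. First, the kernel $(t-s)^{-\vartheta-1}$ has exponent below $-1$, so the Beta integral diverges as a Lebesgue integral at $s=t$ and is being used only in its analytically continued (Hadamard-regularised) form; I would flag that this is exactly the same convention already adopted silently in Lemmas \ref{lem:frac} and the variance lemma preceding it, so no new justification beyond consistency is needed. Second, and more substantively, moving $Var$ inside the integral treats the Riemann-Liouville integral process as if variances add linearly along the kernel, ignoring the covariance of $N^\nu_\lambda(s)$ across different times; this is the step I expect to be the main obstacle to full rigour, and I would present it, as the paper does earlier, as a formal moment computation inherited from the homogeneous case rather than attempt the genuine double-integral covariance evaluation.
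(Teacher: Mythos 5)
Your proposal is correct and follows essentially the same route as the paper: the kernel representation from the Remark, substitution of the mean from Theorem \ref{theorem:non-homo-frac} with $\Lambda^\nu(s)=b^{-a\nu}s^{a\nu}$, the analytically continued Beta integral $\int_0^t(t-s)^{-\vartheta-1}s^{\gamma}\,\d s=t^{\gamma-\vartheta}\Gamma(-\vartheta)\Gamma(\gamma+1)/\Gamma(1-\vartheta+\gamma)$, and the cancellation $-\vartheta\,\Gamma(-\vartheta)=\Gamma(1-\vartheta)$. You in fact go slightly beyond the paper, whose printed proof computes only the mean and omits the variance entirely; your explicit regrouping of $Var[N^\nu_\lambda(s)]$ into $q\Lambda^\nu(s)+(2d-q^2)b^{-2a\nu}s^{2a\nu}$, together with your candid flagging of the divergent kernel and the formal passage of $Var$ through the integral, is consistent with how the paper handles the analogous homogeneous lemma.
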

 \begin{proof}Now from Theorem \ref{theorem:non-homo-frac}, we have that
 \begin{eqnarray*}
 \E[\frac{\d}{\d t}\mathcal{N}^{1-\vartheta,\nu}_\lambda(t)]&=&\frac{-\vartheta}{\Gamma(1-\vartheta)}\int_0^t(t-s)^{-\vartheta-1}\E[N^\nu_\lambda(s)]\d s \\
 &=&\frac{-\vartheta}{\Gamma(1-\vartheta)}\int_0^t(t-s)^{-\vartheta-1}\frac{\Lambda(s)^\nu}{\Gamma(\nu+1)}\d s\\
 &=&\frac{-\vartheta}{\Gamma(1-\vartheta)\Gamma(\nu+1)}\int_0^t(t-s)^{-\vartheta-1}\bigg(\frac{s}{b}\bigg)^{a\nu}\d s\\
 &=&\frac{-\vartheta b^{-a\nu}}{\Gamma(1-\vartheta)\Gamma(\nu+1)}\int_0^t(t-s)^{-\vartheta-1}s^{a\nu}\d s\\
 &=&\frac{-\vartheta b^{-a\nu}}{\Gamma(1-\vartheta)\Gamma(\nu+1)}\frac{t^{a\nu-\vartheta}\Gamma(-\vartheta)\Gamma(1+a\nu)}{\Gamma(1-\vartheta+a\nu)}
  \end{eqnarray*}
 \end{proof}
 \begin{remark}
 The mean of the non-homogeneous fractional process,\\ $\E[\frac{\d}{\d t}\mathcal{N}^{1-\vartheta,\nu}_\lambda(t)]=\frac{1}{\lambda}\E[\frac{\d}{\d t}\mathcal{N}^{1-\vartheta,\nu}(t)]$ for the Weibull's rate function  for $a=b=1$.
 \end{remark}
\noindent
 For the Gompertz and Makeham's rate functions, we were able to compute the expectations of $\frac{\d}{\d t}\mathcal{N}^{1-\vartheta,\nu}_\lambda(t)$ for $\nu=1$.
 \begin{lemma}Given the Gompertz's rate function $\Lambda(t)=\frac{a}{b}\big(\e^{b t}-1\big)$, we have
 $$\E[\frac{\d}{\d t}\mathcal{N}^{1-\vartheta,1}_\lambda(t)]=a b^{\vartheta-1}\e^{b t}\bigg[1+\frac{\vartheta}{\Gamma(1-\vartheta)}\Gamma(-\vartheta,b t)\bigg]-\frac{a t^{-\vartheta}}{b\Gamma(1-\vartheta)}.$$
 \end{lemma}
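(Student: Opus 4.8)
The plan is to evaluate the mean directly from the integral representation of $D^\vartheta_t$ recorded in the Remark, namely $\E\big[\frac{\d}{\d t}\mathcal{N}^{1-\vartheta,\nu}_\lambda(t)\big]=\frac{-\vartheta}{\Gamma(1-\vartheta)}\int_0^t(t-s)^{-\vartheta-1}\E[N^\nu_\lambda(s)]\,\d s$, specialised to $\nu=1$. For $\nu=1$ one has $q=1/\Gamma(2)=1$, so Theorem \ref{theorem:non-homo-frac} gives $\E[N^1_\lambda(s)]=\Lambda(s)=\frac{a}{b}(\e^{bs}-1)$. Substituting the Gompertz rate function and pulling the constant $a/b$ out front, the problem reduces to the two integrals $\int_0^t(t-s)^{-\vartheta-1}\e^{bs}\,\d s$ and $\int_0^t(t-s)^{-\vartheta-1}\,\d s$, both of which I would read in the same analytically-continued (finite-part) sense already adopted in Lemma \ref{lem:frac}.

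For the exponential integral I would substitute $u=t-s$ to obtain $\e^{bt}\int_0^t u^{-\vartheta-1}\e^{-bu}\,\d u$, and then rescale $v=bu$ so that the inner integral becomes $b^\vartheta\int_0^{bt}v^{-\vartheta-1}\e^{-v}\,\d v=b^\vartheta\gamma(-\vartheta,bt)$, a lower incomplete gamma function with the negative parameter $-\vartheta$. The identity $\gamma(-\vartheta,bt)=\Gamma(-\vartheta)-\Gamma(-\vartheta,bt)$ then rewrites this through the convergent upper incomplete gamma function $\Gamma(-\vartheta,bt)$ that appears in the statement. The second, purely power-type integral is the $\nu=0$ instance of the Beta-type identity used in Lemma \ref{lem:frac}, giving $\int_0^t(t-s)^{-\vartheta-1}\,\d s=t^{-\vartheta}\Gamma(-\vartheta)/\Gamma(1-\vartheta)=-t^{-\vartheta}/\vartheta$ after the recurrence $\Gamma(1-\vartheta)=-\vartheta\,\Gamma(-\vartheta)$.

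Finally I would assemble the three resulting terms, repeatedly invoking $\Gamma(1-\vartheta)=-\vartheta\,\Gamma(-\vartheta)$ to clear the prefactor $\frac{-\vartheta}{\Gamma(1-\vartheta)}$: the $\Gamma(-\vartheta)$ piece of the exponential integral collapses to the leading term $a b^{\vartheta-1}\e^{bt}$, the upper-incomplete-gamma piece produces the correction $a b^{\vartheta-1}\e^{bt}\frac{\vartheta}{\Gamma(1-\vartheta)}\Gamma(-\vartheta,bt)$, and the power-type integral contributes $-\frac{a\,t^{-\vartheta}}{b\,\Gamma(1-\vartheta)}$; collecting these gives exactly the claimed formula. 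I expect the only real obstacle to be analytic rather than algebraic: both $\int_0^t u^{-\vartheta-1}\e^{-bu}\,\d u$ and $\int_0^t(t-s)^{-\vartheta-1}\,\d s$ diverge at the lower endpoint for $\vartheta\in(0,1)$, so the whole computation hinges on consistently interpreting them via the complete/incomplete gamma continuation (equivalently the Hadamard finite part) that the earlier lemmas already use. Once that convention is fixed, the remaining steps are the elementary gamma simplifications indicated above.
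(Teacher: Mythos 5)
Your proposal is correct and follows essentially the same route as the paper: starting from the Riemann--Liouville representation $\frac{-\vartheta}{\Gamma(1-\vartheta)}\int_0^t(t-s)^{-\vartheta-1}\E[N^1_\lambda(s)]\,\d s$ with $\E[N^1_\lambda(s)]=\Lambda(s)$, splitting off the exponential and power parts, and evaluating them (in the analytically continued sense) as $b^\vartheta\e^{bt}\big[\Gamma(-\vartheta)-\Gamma(-\vartheta,bt)\big]$ and $-t^{-\vartheta}/\vartheta$ respectively, exactly as in the paper's two-line computation. Your explicit attention to the finite-part interpretation of the divergent endpoint is a welcome clarification of a convention the paper leaves implicit.
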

 \begin{proof}Following similar steps as above, we obtain
 \begin{eqnarray*}
 \E[\frac{\d}{\d t}\mathcal{N}^{1-\vartheta,1}_\lambda(t)]&=&\frac{-\vartheta a/b}{\Gamma(1-\vartheta)\Gamma(2)}\int_0^t(t-s)^{-\vartheta-1}\big(\e^{b s}-1\big)\d s\\
 &=&\frac{-\vartheta a/b}{\Gamma(1-\vartheta)}\bigg\{\frac{t^{-\vartheta}}{\vartheta}+b^\vartheta\e^{b t}\big[\Gamma(-\vartheta)-\Gamma(-\vartheta,b t)\big]\bigg\}
 \end{eqnarray*}
 \end{proof}
 \begin{lemma}For the Makeham's rate function $\Lambda(t)=\frac{a}{b}\big(\e^{b t}-1+\frac{b\mu}{a}t\big)$, we have
 \begin{eqnarray*}\E[\frac{\d}{\d t}\mathcal{N}^{1-\vartheta,1}_\lambda(t)]&=&\frac{a}{b\vartheta(-1+\vartheta)\Gamma(1-\vartheta)}t^{-\vartheta}\bigg\{-(1-\vartheta)+\frac{b}{a}\mu t \\&+&(1-\vartheta)HypergeometricPFQ\bigg[\{1\},\bigg\{\frac{1}{2}-\frac{\vartheta}{2},1-\frac{\vartheta}{2}\bigg\},\frac{b^2 t^2}{4}\bigg]\\&+&b t HypergeometricPFQ\bigg[\{1\},\bigg\{1-\frac{\vartheta}{2},\frac{3}{2}-\frac{\vartheta}{2}\bigg\},\frac{b^2 t^2}{4}\bigg]\bigg\}.\end{eqnarray*}
 \end{lemma}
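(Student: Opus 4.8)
The plan is to exploit the collapse of the fractional structure at $\nu=1$. By Theorem \ref{theorem:non-homo-frac} the constant $q=1/\Gamma(1+\nu)=1/\Gamma(2)=1$, so the mean of the non-homogeneous process reduces to $\E[N^1_\lambda(s)]=\Lambda(s)$. Taking expectations in the Riemann--Liouville representation of the Remark and interchanging $\E$ with the $\d s$-integral, the object to evaluate becomes
$$\E\Big[\tfrac{\d}{\d t}\mathcal{N}^{1-\vartheta,1}_\lambda(t)\Big]=\frac{-\vartheta}{\Gamma(1-\vartheta)}\int_0^t(t-s)^{-\vartheta-1}\Lambda(s)\,\d s,$$
exactly as in the Gompertz and Weibull lemmas. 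Since differentiation in $t$ commutes with $\E$, this is precisely the Riemann--Liouville derivative $D^\vartheta_t\Lambda(t)$ of the Makeham rate; I would adopt this viewpoint to keep the (formally divergent) kernel $(t-s)^{-\vartheta-1}$ under control through analytic continuation of the Beta integral.

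Next I would substitute $\Lambda(s)=\frac{a}{b}(\e^{bs}-1)+\mu s$ and split by linearity into three pieces. The polynomial pieces are handled by the regularized identity $\int_0^t(t-s)^{-\vartheta-1}s^{n}\,\d s=\frac{\Gamma(-\vartheta)\,n!}{\Gamma(n+1-\vartheta)}\,t^{n-\vartheta}$, equivalently $D^\vartheta_t t^{n}=\frac{\Gamma(n+1)}{\Gamma(n+1-\vartheta)}t^{n-\vartheta}$. The constant $-\frac{a}{b}$ then contributes $-\frac{a}{b}\frac{t^{-\vartheta}}{\Gamma(1-\vartheta)}$, which is the $-(1-\vartheta)$ summand inside the braces, and $\mu s$ contributes $\frac{\mu\,t^{1-\vartheta}}{(1-\vartheta)\Gamma(1-\vartheta)}$, which is the $\frac{b\mu}{a}t$ summand.

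The delicate step is the exponential piece. I would expand $\e^{bs}=\sum_{n\ge0}(bs)^n/n!$ and integrate term by term, obtaining $\frac{a}{b}\,\Gamma(-\vartheta)\,t^{-\vartheta}\sum_{n\ge0}\frac{(bt)^n}{\Gamma(n+1-\vartheta)}$, i.e. a Mittag--Leffler-type series $t^{-\vartheta}E_{1,1-\vartheta}(bt)$. To produce the two ${}_1F_2$ functions in the statement I would separate this sum into even and odd powers of $bt$ and apply the Legendre duplication formula $\Gamma(2z)=2^{2z-1}\pi^{-1/2}\Gamma(z)\Gamma(z+\tfrac12)$ to $\Gamma(2k+1-\vartheta)$ and to $\Gamma(2k+2-\vartheta)$. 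This recasts the even part as a multiple of ${}_1F_2[1;\tfrac12-\tfrac\vartheta2,\,1-\tfrac\vartheta2;\,\tfrac{b^2t^2}{4}]$ and the odd part as a multiple of ${}_1F_2[1;\,1-\tfrac\vartheta2,\,\tfrac32-\tfrac\vartheta2;\,\tfrac{b^2t^2}{4}]$, matching exactly the two hypergeometric arguments and parameter sets displayed.

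Finally I would collect the four contributions and factor out the common $\frac{a}{b}t^{-\vartheta}$ along with the $\Gamma(1-\vartheta)$ and $\Gamma(2-\vartheta)=(1-\vartheta)\Gamma(1-\vartheta)$ denominators created by the duplication step. The main obstacle is this $\Gamma$-function bookkeeping: one must show that the scattered prefactors $\sqrt\pi\,2^{\vartheta}/[\Gamma(\tfrac12-\tfrac\vartheta2)\Gamma(1-\tfrac\vartheta2)]$ and their odd-part analogue collapse, via the duplication and recurrence relations, into the single rational-in-$\vartheta$ constant standing in front of the braces. I would verify that overall multiplicative constant with particular care, since the regularization of the divergent kernel is exactly the place where a stray factor of $\vartheta$ or a sign can be introduced.
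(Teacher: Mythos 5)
Your setup is exactly the paper's: both proofs reduce the claim to evaluating
\begin{equation*}
\E\Big[\tfrac{\d}{\d t}\mathcal{N}^{1-\vartheta,1}_\lambda(t)\Big]
=\frac{-\vartheta\, a/b}{\Gamma(1-\vartheta)}\int_0^t(t-s)^{-\vartheta-1}\Big(\e^{bs}-1+\tfrac{b\mu}{a}\,s\Big)\d s .
\end{equation*}
The paper then simply writes down the evaluated integral in a single step, with no intermediate work (a computer-algebra-style output), whereas you actually derive it: regularized Beta integrals for the two polynomial pieces, a power-series expansion of $\e^{bs}$ producing the Mittag--Leffler series $t^{-\vartheta}\sum_{n\geq 0}(bt)^n/\Gamma(n+1-\vartheta)$, and an even/odd split plus duplication that converts this series into the two ${}_1F_2$ functions with exactly the parameter sets $\{\tfrac12-\tfrac\vartheta2,\,1-\tfrac\vartheta2\}$ and $\{1-\tfrac\vartheta2,\,\tfrac32-\tfrac\vartheta2\}$. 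That part of your plan is sound and supplies the justification the paper omits.

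The gap is the overall constant, which you defer (``verify with particular care'') but never resolve --- and it cannot be resolved in favour of the statement, because your own intermediate results contradict it. Completing your computation with $-\vartheta\Gamma(-\vartheta)=\Gamma(1-\vartheta)$ and $\Gamma(2-\vartheta)=(1-\vartheta)\Gamma(1-\vartheta)$ gives
\begin{equation*}
\E\Big[\tfrac{\d}{\d t}\mathcal{N}^{1-\vartheta,1}_\lambda(t)\Big]
=\frac{a\,t^{-\vartheta}}{b\,\Gamma(2-\vartheta)}\bigg\{-(1-\vartheta)+\tfrac{b\mu}{a}\,t+(1-\vartheta)F_1+b\,t\,F_2\bigg\},
\end{equation*}
where $F_1,F_2$ denote the two ${}_1F_2$'s: the braces agree with the lemma, but the prefactor $\frac{a}{b(1-\vartheta)\Gamma(1-\vartheta)}$ is $-\vartheta$ times the stated $\frac{a}{b\vartheta(-1+\vartheta)\Gamma(1-\vartheta)}$. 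In particular, your claim that the constant piece $-\frac{a\,t^{-\vartheta}}{b\,\Gamma(1-\vartheta)}$ ``is the $-(1-\vartheta)$ summand'' is false for the stated prefactor, which turns that summand into $+\frac{a\,t^{-\vartheta}}{b\,\vartheta\,\Gamma(1-\vartheta)}$. The fault is in the paper, not in your method: the bare integral equals $\frac{t^{-\vartheta}}{\vartheta(-1+\vartheta)}\{\cdots\}$ with the statement's braces, and the paper's displayed answer is just this multiplied by $\frac{a/b}{\Gamma(1-\vartheta)}$, i.e.\ the outer factor $-\vartheta$ was dropped when assembling the final line. Your plan already contains the decisive cross-check: at $\mu=0$ Makeham reduces to Gompertz, and the identity $t^{-\vartheta}E_{1,1-\vartheta}(bt)=b^\vartheta\e^{bt}\big[1+\tfrac{\vartheta}{\Gamma(1-\vartheta)}\Gamma(-\vartheta,bt)\big]$ shows that your constant --- and not the stated one --- reproduces the paper's own (correct) Gompertz lemma. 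So either you present your result as a corrected version of the statement, or the proof fails at the final identification step; as a proof of the lemma exactly as written, it does not close.
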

 \begin{proof}Now continuing as above, we obtain
 \begin{eqnarray*}
 \E[\frac{\d}{\d t}\mathcal{N}^{1-\vartheta,1}_\lambda(t)]&=&\frac{-\vartheta a/b}{\Gamma(1-\vartheta)\Gamma(2)}\int_0^t(t-s)^{-\vartheta-1}\big(\e^{b s}-1+\frac{b\mu}{a}s\big)\d s\\
 &=&\frac{a/b}{\vartheta(-1+\vartheta)\Gamma(1-\vartheta)}t^{-\vartheta}\bigg\{-1+\vartheta+\frac{b}{a}\mu t \\&-&(-1+\vartheta)HypergeometricPFQ\bigg[\{1\},\bigg\{\frac{1}{2}-\frac{\vartheta}{2},1-\frac{\vartheta}{2}\bigg\},\frac{b^2 t^2}{4}\bigg]\\&+&b t HypergeometricPFQ\bigg[\{1\},\bigg\{1-\frac{\vartheta}{2},\frac{3}{2}-\frac{\vartheta}{2}\bigg\},\frac{b^2 t^2}{4}\bigg]\bigg\}
 \end{eqnarray*}
 \end{proof}
 \begin{remark}For Gomertz and Makeham's rate functions, $$Var[\mathcal{N}^{1-\vartheta,1}_\lambda(t)]=\E[\mathcal{N}^{1-\vartheta,1}_\lambda(t)].$$
 \end{remark}
 \section{Some Auxiliary Results} Here, we will exploit the explicit estimates on the heat kernel for $\alpha$ stable processes. For the condition on the existence and uniqueness result for the stable process, we have:
\begin{theorem}\label{existence-uniqueness-alpha-stable}
Suppose that $\C_{d,\alpha,\beta,\lambda,\nu}<\frac{1}{\lip_\sigma}$ for  positive constant $\lip_\sigma$ together with condition \ref{cond:E-U}, then there exists a random field solution $u$ that is unique up to modification.
\end{theorem}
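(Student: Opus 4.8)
The plan is to realise the mild solution \eqref{mild:white} as the unique fixed point of the operator
$$(\sA u)(x,t)=\int_{\R^d} G_{\alpha,\beta}(t,x-y)u_0(y)\d y+\int_0^t\int_{\R^d} G_{\alpha,\beta}(t-s,x-y)\sigma(u(s,y))D_s^\vartheta N^\nu_\lambda(s)\d y\d s$$
acting on the Banach space $\sB$ of jointly measurable, adapted random fields $u$ for which $\|u\|_{1,\beta}=\sup_{t\in[0,T]}\sup_{x\in\R^d}\e^{-\beta t}\E|u(x,t)|$ is finite, and then to invoke the Banach fixed-point theorem. First I would check that the inhomogeneous term $(\sA 0)(x,t)=\int_{\R^d}G_{\alpha,\beta}(t,x-y)u_0(y)\d y$ lies in $\sB$; this follows from the boundedness of $u_0$ together with the finiteness of the spatial mass $\int_{\R^d}G_{\alpha,\beta}(t-s,x-y)\d y$ obtained from Lemma \ref{Caputo-fractional}. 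The convention $\sigma(0)=0$ in Condition \ref{cond:E-U} then ensures that $\sA$ maps $\sB$ into itself once the difference estimate below is available, since $\|\sA u\|_{1,\beta}\leq\|\sA 0\|_{1,\beta}+\lip_\sigma\,\C_{d,\alpha,\beta,\lambda,\nu}\|u\|_{1,\beta}$.

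The core step is the difference estimate. For $u,v\in\sB$, Condition \ref{cond:E-U} gives pointwise $|\sigma(u)-\sigma(v)|\leq\lip_\sigma|u-v|$, so that after taking absolute values inside the $\d y\,\d s$ integral and then expectations,
$$\E|(\sA u)(x,t)-(\sA v)(x,t)|\leq \lip_\sigma\int_0^t\int_{\R^d}G_{\alpha,\beta}(t-s,x-y)\,\E\big[|u(s,y)-v(s,y)|\,|D_s^\vartheta N^\nu_\lambda(s)|\big]\d y\d s.$$
Because the driving term is a forcing process rather than a martingale noise, the inner integral is a pathwise Lebesgue integral and this bound is a plain triangle inequality rather than an It\^o isometry. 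Reducing the joint expectation to the mean of the forcing computed in Lemma \ref{lem:frac} (namely $\E[D_s^\vartheta N^\nu_\lambda(s)]=\frac{\lambda s^{\nu-\vartheta}}{\Gamma(1+\nu-\vartheta)}$ in the homogeneous case, with its Weibull analogue otherwise) and bounding $\sup_y\E|u(s,y)-v(s,y)|\leq\e^{\beta s}\|u-v\|_{1,\beta}$ collapses the spatial part to $\int_{\R^d}G_{\alpha,\beta}(t-s,x-y)\d y$ and the temporal part to a Beta-type integral $\int_0^t\e^{-\beta(t-s)}s^{\nu-\vartheta}\d s$, which is finite because $\nu-\vartheta>-1$. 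Collecting these factors into the single constant $\C_{d,\alpha,\beta,\lambda,\nu}$ yields $\|\sA u-\sA v\|_{1,\beta}\leq\lip_\sigma\,\C_{d,\alpha,\beta,\lambda,\nu}\|u-v\|_{1,\beta}$.

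Under the hypothesis $\C_{d,\alpha,\beta,\lambda,\nu}<\frac{1}{\lip_\sigma}$ the map $\sA$ is a strict contraction on $\sB$, so the Banach fixed-point theorem produces a unique $u\in\sB$ with $\sA u=u$; equivalently the Picard iterates $u_{n+1}=\sA u_n$ converge in $\|\cdot\|_{1,\beta}$ to this $u$, which is the required random field solution, unique up to modification. Since $\|\cdot\|_{1,\beta}$ dominates $\sup_{t\leq T}\sup_x\E|u(x,t)|$ up to the factor $\e^{\beta T}$, the solution automatically satisfies the integrability requirement \eqref{r-field}. I would also note that the smallness of $\C_{d,\alpha,\beta,\lambda,\nu}$ is in fact attainable by enlarging $\beta$, since the exponential weight forces $\int_0^t\e^{-\beta(t-s)}s^{\nu-\vartheta}\d s\to 0$ as $\beta\to\infty$.

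The main obstacle I anticipate is the honest control of the joint expectation $\E[|u(s,y)-v(s,y)|\,|D_s^\vartheta N^\nu_\lambda(s)|]$. The forcing $D_s^\vartheta N^\nu_\lambda(s)=\frac{-\vartheta}{\Gamma(1-\vartheta)}\int_0^s(s-\tau)^{-\vartheta-1}N^\nu_\lambda(\tau)\d\tau$ is not independent of the adapted field $u-v$, and its defining kernel $(s-\tau)^{-\vartheta-1}$ is non-integrable at $\tau=s$, so the passage from this joint expectation to the computed mean must be justified with care, either through a conditioning or independence argument, or through an $L^2$ bound that uses the variance lemmas and a Cauchy--Schwarz step (at the cost of working in an $L^2$-weighted norm rather than the $L^1$ norm $\|\cdot\|_{1,\beta}$). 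Verifying that $\C_{d,\alpha,\beta,\lambda,\nu}$ is genuinely finite and correctly assembled from the heat-kernel mass bound and the fractional-integral factor is the remaining quantitative point.
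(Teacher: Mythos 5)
Your proposal takes essentially the same route as the paper: the paper proves this theorem exactly via its Lemma \ref{lemmaA:E-U} and Lemma \ref{lemmaB:E-U} (the bounds $\|\sA u\|_{1,\beta}\leq \C_{d,\alpha,\beta,\lambda,\nu}\lip_\sigma\|u\|_{1,\beta}$ and $\|\sA u-\sA v\|_{1,\beta}\leq \C_{d,\alpha,\beta,\lambda,\nu}\lip_\sigma\|u-v\|_{1,\beta}$) followed by the standard Picard/fixed-point argument in the weighted norm, which is precisely your contraction scheme. The only differences are minor: the paper assembles the constant from the two-regime kernel estimate of Lemma \ref{Caputo-fractional} (hence the factors $\frac{d+\alpha}{d+\alpha-1}$ and $\frac{\Gamma(\gamma+1)}{\beta^{\gamma+1}}$, requiring $\alpha>d$) rather than from the kernel mass, and it silently performs the factorization $\E\big[|\sigma(u)|\,D_s^\vartheta N^\nu(s)\big]=\E|\sigma(u)|\,\E\big[D_s^\vartheta N^\nu(s)\big]$ that you rightly flag as the step needing justification.
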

\noindent
The proof of the above theorem is based on the following Lemma \ref{lemmaA:E-U} and Lemma \ref{lemmaB:E-U}, see Theorem 4.1.1 of \cite{Omaba}.  
 Now let $$\sA u(t,x):=\int_0^t\int_{\R^d} G_{\alpha,\beta}(t-s,x-y)\sigma(u(s,y)D^\vartheta_s N^{\nu}(s)\d y\d s,$$ and
  $$\sA_\lambda u(t,x):=\int_0^t\int_{\R^d}G_{\alpha,\beta}(t-s,x-y)\sigma(u(s,y))D^\vartheta_s N_\lambda^{\nu}(s)\d y\d s,$$ then the following Lemma(s) follow:
\begin{lemma} \label{lemmaA:E-U}
Suppose that u is predictable and $\| u\|_{1,\beta}<\infty$ for all $\beta>0$ and $\sigma(u)$ satisfies condition \ref{cond:E-U}, then $$\|\sA u\|_{1,\beta}\leq {\C}_{d,\alpha,\beta,\lambda,\nu}\lip_\sigma\| u\|_{1,\beta},$$ where ${\C}_{d,\alpha,\beta,\lambda,\nu}:=\frac{2\lambda c_2}{\Gamma(1-\vartheta+\nu)}\frac{d+\alpha}{d+\alpha-1}\frac{\Gamma(\gamma+1)}{\beta^{\gamma+1}}.$
\end{lemma}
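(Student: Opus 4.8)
The plan is to bound $\E|\sA u(t,x)|$ pointwise in $(t,x)$, multiply by $\e^{-\beta t}$, and take the supremum. First I would bring the absolute value inside the space-time integral: since $G_{\alpha,\beta}\ge 0$, Jensen's (or the triangle) inequality together with Fubini (justified once the resulting integral is shown to be finite) gives
$$\E|\sA u(t,x)|\le \int_0^t\int_{\R^d}G_{\alpha,\beta}(t-s,x-y)\,\E\big|\sigma(u(s,y))\,D^\vartheta_s N^{\nu}(s)\big|\,\d y\,\d s.$$

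Next come two reductions. By Condition \ref{cond:E-U} with $\sigma(0)=0$ we have $|\sigma(u(s,y))|\le \lip_\sigma|u(s,y)|$, which produces the factor $\lip_\sigma$. The crucial step is to control $\E|u(s,y)\,D^\vartheta_s N^{\nu}(s)|$: I would separate the adapted solution from the driving term and replace the latter by its mean intensity, using Lemma \ref{lem:frac},
$$\E[D^\vartheta_s N^{\nu}(s)]=\frac{\lambda s^{\nu-\vartheta}}{\Gamma(1-\vartheta+\nu)},$$
to obtain $\E|\sigma(u(s,y))D^\vartheta_s N^{\nu}(s)|\le \lip_\sigma\,\E|u(s,y)|\cdot \frac{\lambda s^{\nu-\vartheta}}{\Gamma(1-\vartheta+\nu)}$. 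Finally the weighted norm gives $\E|u(s,y)|\le \e^{\beta s}\|u\|_{1,\beta}$. Putting these together,
$$\E|\sA u(t,x)|\le \frac{\lambda\lip_\sigma}{\Gamma(1-\vartheta+\nu)}\|u\|_{1,\beta}\int_0^t s^{\nu-\vartheta}\e^{\beta s}\Big(\int_{\R^d}G_{\alpha,\beta}(t-s,x-y)\,\d y\Big)\d s.$$

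I would then estimate the two remaining integrals. For the spatial integral I insert the upper kernel bound of Lemma \ref{Caputo-fractional}(b), $G_{\alpha,\beta}(\tau,z)\le c_2\big(\tau^{-\beta d/\alpha}\wedge \tau^\beta|z|^{-(d+\alpha)}\big)$ (valid since $\alpha>d$), and split $\R^d$ at the crossover radius $|z|=\tau^{\beta/\alpha}$; the resulting radial integral of $1\wedge r^{-(d+\alpha)}$ is finite precisely because $d+\alpha>1$ and is bounded uniformly in $\tau=t-s$ by $2c_2\frac{d+\alpha}{d+\alpha-1}$. For the temporal integral, after multiplying by $\e^{-\beta t}$ I am left with $\e^{-\beta t}\int_0^t s^{\nu-\vartheta}\e^{\beta s}\,\d s$, a Laplace-type convolution which I would dominate by the gamma integral $\int_0^\infty r^{\gamma}\e^{-\beta r}\,\d r=\Gamma(\gamma+1)/\beta^{\gamma+1}$ with $\gamma=\nu-\vartheta>-1$; this supplies the decisive $\beta^{-(\gamma+1)}$ factor. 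Collecting the constants yields exactly $\|\sA u\|_{1,\beta}\le \C_{d,\alpha,\beta,\lambda,\nu}\lip_\sigma\|u\|_{1,\beta}$ after taking $\sup_{t\in[0,T]}\sup_{x\in\R^d}$.

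The main obstacle I anticipate is the separation in the second paragraph: rigorously reducing $\E|u(s,y)\,D^\vartheta_s N^{\nu}(s)|$ to $\E|u(s,y)|\cdot\E[D^\vartheta_s N^{\nu}(s)]$. Because $D^\vartheta_s N^{\nu}(s)$ is a memory-dependent (regularized) functional of the Poisson path on $[0,s]$ and $u(s,y)$ is adapted to that same history, the factorization is not a literal independence statement; it must be justified through the predictability of $u$ and by passing the expectation onto the deterministic mean intensity of the driving term, which is legitimate at the level of first moments. This is exactly where the factor $\frac{\lambda}{\Gamma(1-\vartheta+\nu)}$ and the restriction to first moments are essential. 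The kernel and gamma-integral estimates are then routine, the only mild subtlety being the uniformity in $t\in[0,T]$ of the Laplace-type temporal bound.
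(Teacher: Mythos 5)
Your skeleton is the same as the paper's: bring the expectation inside, use Condition \ref{cond:E-U} with $\sigma(0)=0$, replace $D^\vartheta_s N^\nu(s)$ by its mean $\lambda s^{\nu-\vartheta}/\Gamma(1-\vartheta+\nu)$ from Lemma \ref{lem:frac} (the paper does this silently; your caution about the factorization is fair), absorb $\e^{-\beta s}\E|u(s,y)|$ into $\|u\|_{1,\beta}$, then use Lemma \ref{Caputo-fractional}(b) and a gamma integral. But two of your estimates deviate from the paper's in ways that prevent you from reaching the stated bound. First, the spatial integral: you claim $\int_{\R^d}G_{\alpha,\beta}(\tau,z)\,\d z$ is bounded uniformly in $\tau$ by $2c_2\frac{d+\alpha}{d+\alpha-1}$, and accordingly take $\gamma=\nu-\vartheta$. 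In the paper's computation the same splitting at $|z|=\tau^{\beta/\alpha}$ yields
$$\int_{\R^d}\Big(\tau^{-\beta d/\alpha}\wedge\frac{\tau^{\beta}}{|z|^{d+\alpha}}\Big)\d z \,\leq\, 2\,\frac{d+\alpha}{d+\alpha-1}\,\tau^{\frac{\beta}{\alpha}(1-d)},$$
i.e.\ the time factor $\tau^{\frac{\beta}{\alpha}(1-d)}$ is \emph{retained} (it is constant only when $d=1$), and it is exactly this factor that combines with $s^{\nu-\vartheta}$ to produce the exponent $\gamma=\frac{\beta}{\alpha}(1-d)+\nu-\vartheta$ in $\Gamma(\gamma+1)/\beta^{\gamma+1}$ of the stated constant $\C_{d,\alpha,\beta,\lambda,\nu}$ (compare Lemma \ref{lemmaA:E-U-L}, where this $\gamma$ is written out explicitly). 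With $\gamma=\nu-\vartheta$ you do not obtain ``exactly'' the stated constant; you obtain a different one, which coincides with the paper's only for $d=1$.

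Second, your temporal step fails in part of the admissible parameter range. You claim $\e^{-\beta t}\int_0^t s^{\nu-\vartheta}\e^{\beta s}\d s\le\Gamma(\nu-\vartheta+1)/\beta^{\nu-\vartheta+1}$ uniformly in $t$; but the left-hand side equals $\int_0^t(t-r)^{\nu-\vartheta}\e^{-\beta r}\d r$, which for $\nu>\vartheta$ grows like $t^{\nu-\vartheta}/\beta$ as $t\to\infty$, so no $t$-independent bound exists (and nothing in the lemma excludes $\nu>\vartheta$). The difficulty is that the power and the exponential sit at opposite time arguments of the convolution, so they cannot simply be merged into one gamma integral. The paper proceeds differently: it writes $\e^{-\beta t}=\e^{-\beta(t-s)}\e^{-\beta s}$, attaches $\e^{-\beta(t-s)}$ to the kernel's time argument, and then passes (by an admittedly formal change of variable) to $\int_0^\infty s^{\gamma}\e^{-\beta s}\d s$ with all factors at the same variable. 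Whatever one thinks of the rigor of that manipulation, it is the route that lands on the constant asserted in the lemma, whereas your replacement of it is an inequality that is false whenever $\nu>\vartheta$. These two points are where your write-up genuinely diverges from the statement you set out to prove.
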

\begin{proof}By Lemma \ref{lem:frac}, we have  
\begin{eqnarray*}
\E|{\sA} u(t,x)|&=&\int_0^t\int_{\R^d} G_{\alpha,\beta}(t-s,x-y){\E}|\sigma(u(s,y))|\frac{\lambda s^{\nu-\vartheta}}{\Gamma(1-\vartheta+\nu)}\d y\d s \\
&\leq& \frac{\lambda}{\Gamma(1-\vartheta+\nu)}\\&\times&\int_0^t\int_{\R^d}s^{\nu-\vartheta} G_{\alpha,\beta}(t-s,x-y)\lip_\sigma\E| u(s,y)| \d y\d s.
\end{eqnarray*} Next, Multiply through by $\exp(-\beta t),$ to get
\begin{eqnarray*}
\e^{-\beta t}\E|{\sA} u(t,x)|&\leq&\frac{\lambda\lip_\sigma}{\Gamma(1-\beta+\nu)}\int_0^t\int_{\R^d} s^{\nu-\vartheta}\e^{-\beta(t-s)} G_{\alpha,\beta}(t-s,x-y)\\&\times&\e^{-\beta s}\E| u(s,y)\d y\d s\\
&\leq&\frac{\lambda\lip_\sigma}{\Gamma(1-\vartheta+\nu)}\sup_{s\geq0}\sup_{y\in\R^d}\e^{-\beta s}\E|u(s,y)|\\&\times&\int_0^t\int_{\R^d}s^{\nu-\vartheta}\e^{-\beta(t-s)}G_{\alpha,\beta}(t-s,x-y) \d y\d s.
\end{eqnarray*} 
\noindent
Then we obtain that
\begin{eqnarray*}
\|{\sA}u\|_{1,\beta}&\leq& \frac{\lambda\lip_\sigma}{\Gamma(1-\vartheta+\nu)}\| u\|_{1,\beta}\\&\times&\sup_{t\geq0}\sup_{x\in\R^d} \int_0^t\int_{\R^d}s^{\nu-\vartheta}\e^{-\beta(t-s)}G_{\alpha,\beta}(t-s,x-y) \d y\d s\\
&\leq& \frac{\lambda\lip_\sigma}{\Gamma(1-\vartheta+\nu)}\| u\|_{1,\beta} \int_0^\infty\int_{\R^d}s^{\nu-\vartheta}\e^{-\beta s}G_{\alpha,\beta}(s,y) \d y\d s\\
&\leq&  \frac{\lambda\lip_\sigma}{\Gamma(1-\vartheta+\nu)}\| u\|_{1,\beta}\\&\times& \int_0^\infty\int_{\R^d}s^{\nu-\vartheta}\e^{-\beta s}\bigg\{c_2\bigg(\frac{s^\beta}{|y|^{d+\alpha}}\wedge s^{-\frac{\beta d}{\alpha}}\bigg)\bigg\} \d y\d s.
\end{eqnarray*} The last inequality follows by Lemma \ref{Caputo-fractional}. Let's assume  that $\frac{s^\beta}{| y|^{d+\alpha}}\leq s^{-{\beta d}/\alpha}$ which holds only when $| y|^{\alpha/\beta}\geq s.$ Therefore
\begin{eqnarray*}
\|{\sA}^\alpha u\|_{1,\beta}&\leq&  \frac{\lambda\lip_\sigma c_2}{\Gamma(1-\vartheta+\nu)}\| u\|_{1,\beta}\\&\times&\int_0^\infty  s^{\nu-\vartheta}\e^{-\beta s}\bigg\{s^\beta\int_{| y|\geq s^{\beta/\alpha}}\frac{\d y}{| y|^{d+\alpha}}+s^{-{\beta d}/\alpha}\int_{| y|< s^{\beta/\alpha}}\d y\bigg\}\d s\\
&=&  \frac{\lambda\lip_\sigma c_2}{\Gamma(1-\vartheta+\nu)}\| u\|_{1,\beta}\int_0^\infty  s^{\nu-\vartheta}\e^{-\beta s}\\&\times&\bigg\{s^\beta\bigg(-\int_{-\infty}^{s^{\beta/\alpha}}y^{-(d+\alpha)}\d y
+\int^{\infty}_{s^{\beta/\alpha}}y^{-(d+\alpha)}\d y\bigg)+2s^{{\beta(1-d)}/\alpha}\bigg\}\d s\\
&=& \frac{\lambda\lip_\sigma c_2}{\Gamma(1-\vartheta+\nu)}\| u\|_{1,\beta}\int_0^\infty  s^{\nu-\vartheta}\e^{-\beta s}\\&\times&\bigg\{s^\beta\bigg(-\frac{y^{-(d+\alpha-1)}}{1-d-\alpha}\bigg\arrowvert_{-\infty}^{s^{\beta/\alpha}}+\frac{y^{-(d+\alpha-1)}}{1-d-\alpha}\bigg\arrowvert^{\infty}_{s^{\beta/\alpha}}\bigg)+2s^{{\beta(1-d)}/\alpha}\bigg\}\d s\\
&=&\frac{\lambda\lip_\sigma c_2}{\Gamma(1-\vartheta+\nu)}\| u\|_{1,\beta}\\&\times&\int_0^\infty s^{\nu-\vartheta} \e^{-\beta s}\bigg\{s^\beta\bigg(-\frac{2}{1-d-\alpha}s^{{\beta(1-d-\alpha)}/\alpha}\bigg)+2s^{{\beta(1-d)}/\alpha}\bigg\}\d s\\
&=& \frac{\lambda\lip_\sigma c_2}{\Gamma(1-\vartheta+\nu)}\| u\|_{1,\beta}\\&\times&\int_0^\infty s^{\nu-\vartheta} \e^{-\beta s}\bigg\{\frac{2}{d+\alpha-1}s^{\beta+{\beta(1-d-\alpha)}/\alpha}+2s^{{\beta(1-d)}/\alpha}\bigg\}\d s.
\end{eqnarray*} Thus
\begin{eqnarray*}
\|{\sA} u\|_{1,\beta}&\leq& \frac{2\lambda\lip_\sigma c_2}{\Gamma(1-\vartheta+\nu)}\| u\|_{1,\beta}\frac{d+\alpha}{d+\alpha-1}\int_0^\infty s^{\gamma} \e^{-\beta s}\d s,
\end{eqnarray*} where $\gamma:=\frac{\beta}{\alpha}(1-d)+\nu-\vartheta$. 
Hence
$$\|{\sA} u\|_{1,\beta}\leq \frac{2\lambda\lip_\sigma c_2}{\Gamma(1-\vartheta+\nu)}\| u\|_{1,\beta}\frac{d+\alpha}{d+\alpha-1}\frac{\Gamma(\gamma+1)}{\beta^{\gamma+1}}.$$
\end{proof}
\begin{lemma}\label{lemmaB:E-U}
Suppose  $u$ and $v$ are two predictable random field solutions satisfying $\| u\|_{1,\beta} + \| v\|_{1,\beta}<\infty$ for all $\beta>0$ and $\sigma(u)$ satisfies condition \ref{cond:E-U}, then $$\|{\sA} u-{\sA} v\|_\beta\leq\C_{d,\alpha,\beta,\lambda,\nu}\lip_\sigma\|u-v\|_{1,\beta}.$$
\end{lemma}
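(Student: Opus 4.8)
The plan is to mirror the proof of Lemma \ref{lemmaA:E-U} almost verbatim, exploiting the fact that the stochastic driver $D_s^\vartheta N^\nu(s)$ is common to both $\sA u$ and $\sA v$ and therefore factors cleanly out of the difference. First I would subtract the two operators: since $G_{\alpha,\beta}$ and $D_s^\vartheta N^\nu(s)$ are identical in each, the difference collapses to the single integral
$$\sA u(t,x) - \sA v(t,x) = \int_0^t\int_{\R^d} G_{\alpha,\beta}(t-s,x-y)\big[\sigma(u(s,y)) - \sigma(v(s,y))\big]D_s^\vartheta N^\nu(s)\,\d y\,\d s,$$
so that no cross-terms from the random factor survive.

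Next I would pass to the first moment. Taking $\E|\cdot|$ and invoking the global Lipschitz bound from Condition \ref{cond:E-U} gives $\E|\sigma(u(s,y)) - \sigma(v(s,y))| \le \lip_\sigma\,\E|u(s,y) - v(s,y)|$, while Lemma \ref{lem:frac} supplies the mean of the driver, $\E[D_s^\vartheta N^\nu(s)] = \lambda s^{\nu-\vartheta}/\Gamma(1-\vartheta+\nu)$. These two replacements turn the estimate into exactly the integral that appeared in Lemma \ref{lemmaA:E-U}, but now carrying $\E|u(s,y)-v(s,y)|$ in place of $\E|u(s,y)|$.

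The remaining steps are identical to the earlier lemma: multiply through by $\e^{-\beta t}$, split $\e^{-\beta t} = \e^{-\beta(t-s)}\e^{-\beta s}$, pull out $\sup_{s,y}\e^{-\beta s}\E|u(s,y)-v(s,y)| = \|u-v\|_{1,\beta}$, and bound the remaining deterministic space-time integral using Lemma \ref{Caputo-fractional}(b). After splitting the region $\{|y|^{\alpha/\beta}\ge s\}$ from its complement and integrating the resulting $s^\gamma\e^{-\beta s}$ with $\gamma = \tfrac{\beta}{\alpha}(1-d)+\nu-\vartheta$, one recovers the same constant $\C_{d,\alpha,\beta,\lambda,\nu} = \frac{2\lambda c_2}{\Gamma(1-\vartheta+\nu)}\frac{d+\alpha}{d+\alpha-1}\frac{\Gamma(\gamma+1)}{\beta^{\gamma+1}}$, yielding $\|\sA u - \sA v\|_{1,\beta} \le \C_{d,\alpha,\beta,\lambda,\nu}\lip_\sigma\|u-v\|_{1,\beta}$.

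I do not expect a genuine obstacle, since the computation is a structural copy of Lemma \ref{lemmaA:E-U}. The only point requiring care is the very first step: one must subtract \emph{before} taking the absolute value, so that the common random factor $D_s^\vartheta N^\nu(s)$ cancels inside the integrand rather than being estimated twice. This preserves the clean Lipschitz structure that Theorem \ref{existence-uniqueness-alpha-stable} subsequently exploits, via Banach's fixed-point theorem, under the contraction hypothesis $\C_{d,\alpha,\beta,\lambda,\nu}<1/\lip_\sigma$.
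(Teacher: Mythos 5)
Your proposal is correct and is exactly the route the paper intends: the paper's own proof of Lemma \ref{lemmaB:E-U} consists of the single line ``Similar steps as Lemma \ref{lemmaA:E-U},'' and your argument is precisely that repetition, with the difference $\sigma(u)-\sigma(v)$ replacing $\sigma(u)$ and the Lipschitz bound from Condition \ref{cond:E-U} applied before the kernel estimates of Lemma \ref{Caputo-fractional}. You have simply written out in full the details the paper leaves implicit, arriving at the same constant $\C_{d,\alpha,\beta,\lambda,\nu}$.
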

\begin{proof}
Similar steps as Lemma \ref{lemmaA:E-U}
\end{proof}
\noindent
We now obtain the following estimates for the Weibull's rate function:
\begin{lemma} \label{lemmaA:E-U-L}
Suppose that u is predictable and $\| u\|_{1,\beta}<\infty$ for all $\beta>0$ and $\sigma(u)$ satisfies assumption \eqref{cond:E-U}, then $$\|\sA_\lambda u\|_{1,\beta}\leq {\C}_{d,\alpha,\beta,\nu}\lip_\sigma\| u\|_{1,\beta},$$ where ${\C}_{d,\alpha,\beta,\nu,a,b}:=\frac{2c_2 b^{-a\nu}}{\Gamma(\nu+1)}\frac{\Gamma(1+a\nu)}{\Gamma(1-\vartheta+a\nu)}\frac{d+\alpha}{d+\alpha-1}\frac{\Gamma(\gamma+1)}{\beta^{\gamma+1}}$ with
$\gamma:=\frac{\beta}{\alpha}(1-d)+a\nu-\vartheta$.
\end{lemma}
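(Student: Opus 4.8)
The plan is to mirror the proof of Lemma \ref{lemmaA:E-U} almost verbatim, the only substantive change being that the homogeneous mean from Lemma \ref{lem:frac} is replaced by the mean of $\frac{\d}{\d t}\mathcal{N}^{1-\vartheta,\nu}_\lambda(t)$ computed above for the Weibull rate function. First I would write out $\E|\sA_\lambda u(t,x)|$ by taking expectations inside the double integral and inserting
$$\E[D^\vartheta_s N^\nu_\lambda(s)]=\frac{b^{-a\nu}}{\Gamma(\nu+1)}\frac{\Gamma(1+a\nu)}{\Gamma(1-\vartheta+a\nu)}\, s^{a\nu-\vartheta}$$
in place of the factor $\frac{\lambda s^{\nu-\vartheta}}{\Gamma(1-\vartheta+\nu)}$. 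Since $\sigma$ is Lipschitz with $\sigma(0)=0$ (condition \ref{cond:E-U}), the bound $\E|\sigma(u(s,y))|\leq \lip_\sigma\E|u(s,y)|$ holds, so that the leading constant becomes $\lip_\sigma$ times the Weibull prefactor and the power of $s$ under the integral is shifted to $s^{a\nu-\vartheta}$.

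Next I would multiply through by $\e^{-\beta t}$, split $\e^{-\beta t}=\e^{-\beta(t-s)}\e^{-\beta s}$, and pull out $\sup_{s\geq0}\sup_{y}\e^{-\beta s}\E|u(s,y)|=\|u\|_{1,\beta}$. This reduces the estimate to controlling
$$\sup_{t,x}\int_0^t\int_{\R^d} s^{a\nu-\vartheta}\e^{-\beta(t-s)}G_{\alpha,\beta}(t-s,x-y)\,\d y\,\d s,$$
which after the change of variable $t-s\mapsto s$ is dominated by $\int_0^\infty\int_{\R^d} s^{a\nu-\vartheta}\e^{-\beta s}G_{\alpha,\beta}(s,y)\,\d y\,\d s$. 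I would then invoke the upper heat-kernel bound of Lemma \ref{Caputo-fractional}(b) (valid under $\alpha>d$), splitting the spatial integral at $|y|=s^{\beta/\alpha}$ exactly as before; the region $|y|\geq s^{\beta/\alpha}$ yields the factor $\frac{d+\alpha}{d+\alpha-1}$ (convergent since $d+\alpha>1$) and both regions collapse to the single power $s^{\frac{\beta}{\alpha}(1-d)}$.

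Collecting terms leaves the one-dimensional integral $\int_0^\infty s^\gamma\e^{-\beta s}\,\d s=\frac{\Gamma(\gamma+1)}{\beta^{\gamma+1}}$ with $\gamma:=\frac{\beta}{\alpha}(1-d)+a\nu-\vartheta$, which produces exactly the stated constant $\C_{d,\alpha,\beta,\nu,a,b}$. There is no genuine obstacle here: the argument is structurally identical to that of Lemma \ref{lemmaA:E-U}, and the only points demanding care are bookkeeping ones — carrying the shifted exponent $a\nu-\vartheta$ and the new prefactor $\frac{b^{-a\nu}}{\Gamma(\nu+1)}\frac{\Gamma(1+a\nu)}{\Gamma(1-\vartheta+a\nu)}$ consistently through the computation — together with the tacit parameter restrictions $\alpha>d$, $d+\alpha>1$ and $\gamma>-1$ that are needed for the heat-kernel estimate and for convergence of the final Gamma integral.
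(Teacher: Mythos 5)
Your proposal is correct and is exactly the argument the paper intends: the paper states Lemma \ref{lemmaA:E-U-L} without proof, leaving it to follow from the proof of Lemma \ref{lemmaA:E-U} with the homogeneous mean $\frac{\lambda s^{\nu-\vartheta}}{\Gamma(1-\vartheta+\nu)}$ replaced by the Weibull mean $\frac{b^{-a\nu}}{\Gamma(\nu+1)}\frac{\Gamma(1+a\nu)}{\Gamma(1-\vartheta+a\nu)}s^{a\nu-\vartheta}$, which is precisely what you do. Your explicit flagging of the tacit hypotheses ($\alpha>d$ for the kernel bound and $\gamma>-1$ for convergence of $\int_0^\infty s^{\gamma}\e^{-\beta s}\,\d s$) is a point of care the paper itself glosses over.
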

\begin{lemma}\label{lemmaB:E-U-L}
Suppose  $u$ and $v$ are two predictable random field solutions satisfying $\| u\|_{1,\beta} + \| v\|_{1,\beta}<\infty$ for all $\beta>0$ and $\sigma(u)$ satisfies condition \ref{cond:E-U}, then $$\|{\sA}_\lambda u-{\sA}_\lambda v\|_\beta\leq\C_{d,\alpha,\beta,\nu,a,b}\lip_\sigma\|u-v\|_{1,\beta}.$$
\end{lemma}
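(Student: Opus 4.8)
The plan is to mimic the proof of Lemma \ref{lemmaA:E-U-L} verbatim, the only genuinely new ingredient being the linearity of the operator $\sA_\lambda$ in its argument combined with the global Lipschitz property of $\sigma$. Since $\sA_\lambda$ is built from a single integral against the fixed driving term $D^\vartheta_s N^\nu_\lambda(s)$, the difference is
$$\sA_\lambda u(t,x)-\sA_\lambda v(t,x)=\int_0^t\int_{\R^d}G_{\alpha,\beta}(t-s,x-y)\big[\sigma(u(s,y))-\sigma(v(s,y))\big]D^\vartheta_s N^\nu_\lambda(s)\,\d y\,\d s.$$
Taking absolute values and expectations and separating the mean of the noise out of the integrand (exactly as in Lemma \ref{lemmaA:E-U-L}, using the Weibull computation from Theorem \ref{theorem:non-homo-frac}), Condition \ref{cond:E-U} lets me replace $\E|\sigma(u)-\sigma(v)|$ by $\lip_\sigma\,\E|u-v|$. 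This single substitution is what converts the one-function bound of Lemma \ref{lemmaA:E-U-L} into the desired difference bound.

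Concretely, I would reach
$$\E|\sA_\lambda u(t,x)-\sA_\lambda v(t,x)|\leq \frac{b^{-a\nu}\lip_\sigma}{\Gamma(\nu+1)}\frac{\Gamma(1+a\nu)}{\Gamma(1-\vartheta+a\nu)}\int_0^t\int_{\R^d}s^{a\nu-\vartheta}G_{\alpha,\beta}(t-s,x-y)\,\E|u(s,y)-v(s,y)|\,\d y\,\d s,$$
where the prefactor is precisely $\E[\frac{\d}{\d s}\mathcal{N}^{1-\vartheta,\nu}_\lambda(s)]$ stripped of its $s^{a\nu-\vartheta}$ factor. Next I multiply through by $\e^{-\beta t}$, factor $\e^{-\beta t}=\e^{-\beta(t-s)}\e^{-\beta s}$, and dominate $\e^{-\beta s}\E|u(s,y)-v(s,y)|$ by $\|u-v\|_{1,\beta}$, pulling the supremum over $s$ and $y$ outside the integral.

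From here the computation is word-for-word that of Lemma \ref{lemmaA:E-U-L}: I apply the upper heat-kernel estimate of Lemma \ref{Caputo-fractional}(b), split the spatial integral into the regions $|y|^{\alpha/\beta}\geq s$ and $|y|^{\alpha/\beta}<s$, evaluate the two tail integrals in $y$ to produce the factor $\frac{d+\alpha}{d+\alpha-1}$, and reduce the remaining $s$-integral to the Gamma integral $\int_0^\infty s^\gamma\e^{-\beta s}\,\d s=\Gamma(\gamma+1)/\beta^{\gamma+1}$ with $\gamma=\frac{\beta}{\alpha}(1-d)+a\nu-\vartheta$. Collecting the constants gives exactly $\C_{d,\alpha,\beta,\nu,a,b}\lip_\sigma\|u-v\|_{1,\beta}$.

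I expect no substantive obstacle, since the spatial and temporal estimates are inherited unchanged from Lemma \ref{lemmaA:E-U-L}; the proof is the analogue of the one-line remark used to dispatch Lemma \ref{lemmaB:E-U} from Lemma \ref{lemmaA:E-U}. The only point deserving care is the implicit predictability/independence argument that permits separating the mean $\E[D^\vartheta_s N^\nu_\lambda(s)]$ from $\E|u-v|$ inside the double integral, precisely as invoked in the earlier lemmas; granting this, the contraction constant $\C_{d,\alpha,\beta,\nu,a,b}$ carries over intact and the estimate follows.
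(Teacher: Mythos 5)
Your proposal is correct and follows exactly the route the paper intends: the paper gives no explicit proof of Lemma \ref{lemmaB:E-U-L}, deferring (as with Lemma \ref{lemmaB:E-U}) to ``similar steps'' as the one-function estimate, which is precisely your strategy of applying Condition \ref{cond:E-U} to $\sigma(u)-\sigma(v)$ and repeating the Weibull-mean, heat-kernel, and Gamma-integral computations of Lemma \ref{lemmaA:E-U-L}. Your added remark about the tacit separation of $\E[D^\vartheta_s N^\nu_\lambda(s)]$ from $\E|u-v|$ flags a point the paper itself glosses over, but does not change the argument.
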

\section{Moment Growths}In this section, we give the proofs of the energy moment growth of our random field solutions. Recall that the mild solution is given by
\begin{eqnarray*}u(x,t)=(\mathcal{G}_t^{\alpha,\beta} u_0)(x)+\sA u(x,t),\end{eqnarray*}
where $$(\mathcal{G}_t^{\alpha,\beta} u_0)(x)=\int_{\R^d}G_{\alpha,\beta}(t,x-y)u(0,y)\d y.$$
We begin with some growth bounds on the semigroup $(\mathcal{G}_t^{\alpha,\beta} u_0)(x)$ and  show that the first term $(\mathcal{G}_t^{\alpha,\beta} u_0)(x)$ of the mild solution grows or decays but only polynomially fast with time. First assume that the initial function $u_0$ is bounded and we have the following:
\begin{lemma}\label{lemma4}There exists some constant $c_0>0$ such that for $\alpha>d,$
\begin{eqnarray*}
|(\mathcal{G}_t^{\alpha,\beta} u_0)(x)|\leq 2c_0\frac{d+\alpha}{d+\alpha-1}t^{\frac{\beta}{\alpha}(1-d)}.
\end{eqnarray*}
\end{lemma}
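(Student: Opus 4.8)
The plan is to bound $(\mathcal{G}_t^{\alpha,\beta} u_0)(x)=\int_{\R^d}G_{\alpha,\beta}(t,x-y)u_0(y)\,\d y$ directly, reducing the estimate to an integral involving the heat kernel alone. Since $u_0$ is bounded I would first extract the sup-norm, $|(\mathcal{G}_t^{\alpha,\beta} u_0)(x)|\leq \|u_0\|_\infty\int_{\R^d}G_{\alpha,\beta}(t,x-y)\,\d y$, and then invoke the upper estimate of Lemma \ref{Caputo-fractional}(b) — this is exactly where the hypothesis $\alpha>d$ is used — to replace the kernel by $c_2\big(t^{-\beta d/\alpha}\wedge t^\beta|x-y|^{-(d+\alpha)}\big)$. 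A translation $z=x-y$, using shift-invariance of Lebesgue measure, removes the dependence on $x$ and leaves the $x$-free integral $\int_{\R^d}\big(t^{-\beta d/\alpha}\wedge t^\beta|z|^{-(d+\alpha)}\big)\,\d z$; the final bound is therefore automatically uniform in $x$.

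Next I would split this integral at the radius where the two competing terms balance, namely $|z|=t^{\beta/\alpha}$. For $|z|\geq t^{\beta/\alpha}$ the minimum equals the tail term $t^\beta|z|^{-(d+\alpha)}$, while for $|z|<t^{\beta/\alpha}$ it equals the constant $t^{-\beta d/\alpha}$. This is precisely the decomposition already executed inside the proof of Lemma \ref{lemmaA:E-U}, so I would reuse that bookkeeping. The tail contributes $t^\beta\int_{|z|\geq t^{\beta/\alpha}}|z|^{-(d+\alpha)}\,\d z$, which converges because $d+\alpha>1$ and evaluates to $\tfrac{2}{d+\alpha-1}\,t^{\beta(1-d)/\alpha}$ after using the antiderivative $\tfrac{r^{-(d+\alpha-1)}}{d+\alpha-1}$; the inner region contributes $t^{-\beta d/\alpha}$ times the length $2t^{\beta/\alpha}$ of the region, i.e. $2\,t^{\beta(1-d)/\alpha}$.

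The decisive observation is that both pieces carry the \emph{same} power of $t$: simplifying $\beta+\tfrac{\beta(1-d-\alpha)}{\alpha}=\tfrac{\beta(1-d)}{\alpha}$ shows the tail exponent coincides with the inner exponent $\tfrac{\beta(1-d)}{\alpha}$. Adding the constants, $\tfrac{2}{d+\alpha-1}+2=2\,\tfrac{d+\alpha}{d+\alpha-1}$, and absorbing $c_2\|u_0\|_\infty$ into a single positive constant $c_0$, produces $|(\mathcal{G}_t^{\alpha,\beta} u_0)(x)|\leq 2c_0\,\tfrac{d+\alpha}{d+\alpha-1}\,t^{\beta(1-d)/\alpha}$, which is the claim. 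I expect the only genuine care-point to be the tail integral: one must confirm that $d+\alpha>1$ (so the radial integral is finite and the antiderivative vanishes at infinity with the correct sign) and that the two exponents really collapse to $\tfrac{\beta(1-d)}{\alpha}$; everything else is the routine exponent arithmetic already rehearsed in Lemma \ref{lemmaA:E-U}.
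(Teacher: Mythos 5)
Your proposal follows the paper's own proof essentially step for step: extract $\sup_{y}|u_0(y)|$, apply the upper bound of Lemma \ref{Caputo-fractional}(b) (where $\alpha>d$ enters), split the integral of $t^{-\beta d/\alpha}\wedge t^{\beta}|z|^{-(d+\alpha)}$ at $|z|=t^{\beta/\alpha}$, and observe that both pieces carry the exponent $\tfrac{\beta}{\alpha}(1-d)$ so the constants add to $2\tfrac{d+\alpha}{d+\alpha-1}$. The only cosmetic differences are your explicit change of variables $z=x-y$ and your explicit tracking of the constant $c_2$, which the paper absorbs silently; the argument is the same.
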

\begin{proof}Write,
\begin{eqnarray*}
|(\mathcal{G}_t^{\alpha,\beta} u_0)(x)|&=&\bigg\arrowvert\int_{\R^d} G_{\alpha,\beta}(t,x-y)u_0(y)\d y\bigg\arrowvert\\&\leq&\sup_{y\in\R^d}| u_0(y)|\int_{\R^d} G_{\alpha,\beta}(t,x-y)\d y\\
&=& c_0\int_{\R^d} G_{\alpha,\beta}(t,x-y)\d y.
\end{eqnarray*}
Using the estimates on the density of the changed process, for $\alpha>d$:
 \begin{eqnarray*}
|(P_t^\alpha u_0)(x)|\leq c_0\int_{\R^d}\big(t^{-{\beta d}/\alpha}\wedge\frac{t^\beta}{|x-y|^{d+\alpha}}\big)\d y.
\end{eqnarray*}
But
\begin{eqnarray*}
\int_{\R^d}\big(t^{-{\beta d}/\alpha}\wedge\frac{t^\beta}{|x-y|^{d+\alpha}}\big)\d y&=& t^\beta\int_{|x-y|\geq t^{\beta/\alpha}} \frac{\d y}{|x-y|^{d+\alpha}}\\&+&t^{-{\beta d}/\alpha}\int_{|x-y|< t^{\beta/\alpha}}\d y\\
&=&\frac{2}{d+\alpha-1}t^{\beta+\frac{\beta}{\alpha}(1-d-\alpha)}+2t^{\frac{\beta}{\alpha}(1-d)}
\end{eqnarray*}
\end{proof}
\noindent
Next result follows with the assumption that $u_0$ is positive on a set of positive measure.

\begin{proposition}\cite{Foondun}\label{estimatePDEpart}
There exists a $T>0$ and a constant $c_1$ such that for all $t>T$ and all $x\in B(0,\,t^{1/\alpha})$,  $$(\mathcal{G}^{\alpha,\beta}_{t+t_0}u_0)(x)\geq \frac{c_1}{(t+t_0)^{{\beta d}/\alpha}}.$$
\end{proposition}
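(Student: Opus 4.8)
The plan is to produce the lower bound by throwing away all of the (nonnegative) space integral except the contribution of a fixed region on which the datum carries positive mass. Writing $(\mathcal{G}^{\alpha,\beta}_{t+t_0}u_0)(x)=\int_{\R^d}G_{\alpha,\beta}(t+t_0,x-y)u_0(y)\,\d y$ and using $u_0\ge 0$ together with the standing hypothesis $\|u_0\|_{L^1(B(0,1))}>0$, I would first restrict the domain of integration to the unit ball, so that $(\mathcal{G}^{\alpha,\beta}_{t+t_0}u_0)(x)\ge\int_{B(0,1)}G_{\alpha,\beta}(t+t_0,x-y)u_0(y)\,\d y$. Everything then reduces to a single uniform lower bound on the kernel $G_{\alpha,\beta}(t+t_0,x-y)$ valid simultaneously for all $x$ in the ball appearing in the statement and all $y\in B(0,1)$.

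For that bound I would call on the lower estimate of Lemma \ref{Caputo-fractional}(a), namely $G_{\alpha,\beta}(t+t_0,x-y)\ge c\big((t+t_0)^{-\beta d/\alpha}\wedge(t+t_0)^{\beta}|x-y|^{-(d+\alpha)}\big)$, and I would arrange the geometry so that the space-independent branch $(t+t_0)^{-\beta d/\alpha}$ is the active term of the minimum. That branch is the smaller one exactly when $|x-y|\le(t+t_0)^{\beta/\alpha}$; since $|x-y|\le|x|+|y|\le|x|+1$ for $y\in B(0,1)$, this is guaranteed once $|x|\le(t+t_0)^{\beta/\alpha}-1$. For $t>T$ with $T$ large one has $(t+t_0)^{\beta/\alpha}-1\ge\tfrac12(t+t_0)^{\beta/\alpha}$, so the space-independent branch governs uniformly over the growing ball of radius $\tfrac12(t+t_0)^{\beta/\alpha}$, and there $G_{\alpha,\beta}(t+t_0,x-y)\ge c\,(t+t_0)^{-\beta d/\alpha}$ independently of $x$ and $y$.

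With that uniform kernel bound in hand the conclusion is immediate: pulling the constant out of the integral gives $(\mathcal{G}^{\alpha,\beta}_{t+t_0}u_0)(x)\ge c\,(t+t_0)^{-\beta d/\alpha}\int_{B(0,1)}u_0(y)\,\d y=c\,\|u_0\|_{L^1(B(0,1))}\,(t+t_0)^{-\beta d/\alpha}$, whence one sets $c_1:=c\,\|u_0\|_{L^1(B(0,1))}$, which is strictly positive precisely because $u_0$ is positive on a set of positive measure. Since the kernel bound was obtained uniformly in $x$ over this region, the estimate holds at each such $x$, as required.

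The step I expect to be the genuine obstacle is the scale comparison carried out in the second paragraph, and in particular reconciling the radius $t^{1/\alpha}$ of the ball in the statement with the natural spatial scale $(t+t_0)^{\beta/\alpha}$ of the kernel on which the space-independent branch is active. For $\beta<1$ these two radii differ, $t^{1/\alpha}$ eventually exceeding $(t+t_0)^{\beta/\alpha}$, so the argument above delivers the uniform bound cleanly on the growing sub-ball of radius comparable to $(t+t_0)^{\beta/\alpha}$; extending it to the full ball $B(0,t^{1/\alpha})$ as in the cited reference is the delicate point, and it is there that the choice of the threshold $T$ and the translation by $t_0>1$ must do the real work, whereas the remaining manipulations (restriction to $B(0,1)$, extraction of the constant, and the positivity of $\|u_0\|_{L^1(B(0,1))}$) are routine.
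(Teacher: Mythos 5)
The paper offers no proof of Proposition \ref{estimatePDEpart}: it is imported wholesale from \cite{Foondun}. Your argument is, step for step, the proof given in that reference --- restrict the spatial integral to a fixed ball carrying positive $L^1$-mass of $u_0$, invoke the lower bound of Lemma \ref{Caputo-fractional}(a), and check that on the relevant range of $|x-y|$ the minimum is of size $(t+t_0)^{-\beta d/\alpha}$ --- so the substance of your proposal is correct and is the intended argument.

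The ``delicate point'' you defer to the reference, however, is not a point where a harder argument takes over; it is a defect of the statement as transcribed in this paper. The scale your comparison produces, $|x|\lesssim(t+t_0)^{\beta/\alpha}$, is the right one: that is the ball for which the argument in \cite{Foondun} is designed (radius $t^{\beta/\alpha}$, not $t^{1/\alpha}$), and to cover all of it rather than half of it you need only the small refinement that for $|x-y|\leq 2(t+t_0)^{\beta/\alpha}$ \emph{both} branches of the minimum in Lemma \ref{Caputo-fractional}(a) are bounded below by $2^{-(d+\alpha)}(t+t_0)^{-\beta d/\alpha}$, so one need not insist that the on-diagonal branch be the active one. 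For $\beta<1$ the statement over the larger ball $B(0,t^{1/\alpha})$ is genuinely false, and no choice of $T$ or $t_0$ repairs it: take $u_0=\1_{B(0,1)}$, which satisfies all standing assumptions, and $|x|=t^{1/\alpha}$. Then $|x-y|\geq t^{1/\alpha}-1$ for every $y\in B(0,1)$, so the upper bound of Lemma \ref{Caputo-fractional}(b) (in force under the paper's standing assumption $\alpha>d$) gives
\begin{equation*}
(\mathcal{G}^{\alpha,\beta}_{t+t_0}u_0)(x)\;\leq\; c_2\,\frac{(t+t_0)^{\beta}}{\left(t^{1/\alpha}-1\right)^{d+\alpha}}\,\mu\big(B(0,1)\big)\;\leq\; C\,t^{\beta-1-d/\alpha}
\end{equation*}
for all large $t$, while $t^{\beta-1-d/\alpha}\big/(t+t_0)^{-\beta d/\alpha}\leq C'\,t^{-(1-\beta)(1+d/\alpha)}\rightarrow 0$; hence the asserted lower bound fails near the boundary of $B(0,t^{1/\alpha})$. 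Your instinct that the radius $t^{1/\alpha}$ cannot be reconciled with the kernel's natural scale was therefore correct, and the resolution is that the stated radius is an error, not that extra work is needed. Note finally that where the proposition is actually used --- the proof of Theorem \ref{growth-firstmoment} --- only $x\in B(0,1)$ is required, which your argument covers with room to spare.
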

\subsection{Proofs of main results}
\begin{proof}[ Proof of Theorem \ref{growth-upperbound}]We begin by writing
\begin{eqnarray*}
\E|u(x,t)|&=&|(\mathcal{G}_t^{\alpha,\beta} u_0)(x)|\\&+&\int_0^t\int_{\R^d}G_{\alpha,\beta}(t-s,x-y)\E|\sigma(u(s,y))|\frac{\lambda}{\Gamma(1-\vartheta+\nu)}s^{\nu-\vartheta} \d y \d s\\
&\leq& 2c_0\frac{d+\alpha}{d+\alpha-1}t^{\frac{\beta}{\alpha}(1-d)}+\frac{\lambda}{\Gamma(1-\vartheta+\nu)}\lip_\sigma\\&\times& \int_0^ts^{\nu-\vartheta}\int_{\R^d}G_{\alpha,\beta}(t-s,x-y)\E|u(s,y)| \d y \d s\\
&\leq&c t^{\frac{\beta}{\alpha}(1-d)}+\frac{\lambda\lip_\sigma}{\Gamma(1-\vartheta+\nu)}\\&\times&\int_0^ts^{\nu-\vartheta}\sup_{y\in\R^d}\E|u(s,y)|\int_{\R^d}G_{\alpha,\beta}(t-s,x-y)| \d y\d s
\end{eqnarray*}
Now define $f_{\vartheta,\nu}(t)= t^{\nu-\vartheta}\sup_{x\in\R^d}\E|u(t,x)|$, then 
$$\E|u(x,t)|\leq c t^{\frac{\beta}{\alpha}(1-d)}+\frac{\lambda\lip_\sigma c_2}{\Gamma(1-\vartheta+\nu)}\int_0^t (t-s)^{\frac{\beta}{\alpha}(1-d)}f_{\beta,\nu}(s)\d s.$$
Let $t_0<t<T$ and assume $\frac{\beta}{\alpha}(1-d)>0$. Given that $t-s\leq t<T$, we have
\begin{eqnarray*}
f_{\vartheta,\nu}(t)&\leq& c t^{\nu-\vartheta+\frac{\beta}{\alpha}(1-d)}+\frac{\lambda\lip_\sigma c_2}{\Gamma(1-\vartheta+\nu)} t^{\nu-\vartheta}\int_0^t (t-s)^{\frac{\beta}{\alpha}(1-d)} f_{\vartheta,\nu}(s)\d s\\
&\leq& cT^{\nu-\vartheta+\frac{\beta}{\alpha}(1-d)}+\frac{\lambda\lip_\sigma c_2}{\Gamma(1-\vartheta+\nu)} T^{\nu-\vartheta+\frac{\beta}{\alpha}(1-d)}\int_0^t f_{\vartheta,\nu}(s)\d s.
\end{eqnarray*}Then by Gronwall's inequality, we obtain $$f_{\vartheta,\nu}(t)\leq c_1\exp(c_3 t);\,\,c_1=c T^{\nu-\vartheta+\frac{\beta}{\alpha}(1-d)},\,\,\textrm{and}\,\,c_3=\frac{\lambda\lip_\sigma c_2}{\Gamma(1-\vartheta+\nu)} T^{\nu-\vartheta+\frac{\beta}{\alpha}(1-d)}.$$
\end{proof}
\begin{proof}[ Proof of Theorem \ref{growth-firstmoment}]We begin by taking moment of the solution
\begin{eqnarray*}
\E|u(x,t+t_0)|&=&|(\mathcal{G}^{\alpha,\beta}_{t+t_0}u_0)(x)|+\int_0^{t+t_0}\\&\times&\int_{\R^d}|G_{\alpha,\beta}(t+t_0-s,x-y)|\E|\sigma(u(s,y))|\frac{\lambda}{\Gamma(1-\vartheta+\nu)}s^{\nu-\vartheta} \d y \d s\\
&\geq& c_1(t+t_0)^{-{\beta d}/\alpha}+\frac{\lambda L_\sigma}{\Gamma(1-\vartheta+\nu)}\\&\times& \int_{t_0}^{t+t_0}s^{\nu-\vartheta}\int_{\R^d}||G_{\alpha,\beta}(t+t_0-s,x-y)|\E|u(s,y)| \d y \d s\\
&\geq&  c_1(t+t_0)^{-{\beta d}/\alpha}+\frac{\lambda L_\sigma c_2}{\Gamma(1-\vartheta+\nu)}\\&\times&\int_{t_0}^{t+t_0}s^{\nu-\vartheta}\inf_{y\in B(0,1)}\E|u(s,y)|\int_{B(0,1)}|G_{\alpha,\beta}(t+t_0-s,x-y)| \d s\d y.
\end{eqnarray*}
Make the following change of variable $s-t_0$, then set $v(t,x):=u(t+t_0,x)$ for a fixed $t_0>0$ together with Lemma \ref{estimatePDEpart} to write 
\begin{eqnarray*}
\E|v(x,t)|&\geq& c_1(t+t_0)^{-{\beta d}/\alpha}+\frac{\lambda L_\sigma c_2}{\Gamma(1-\vartheta+\nu)}\\&\times&\int_0^t (s+t_0)^{\nu-\vartheta}\inf_{y\in B(0,1)}\E|v(s,y)|\int_{B(0,1)}(t-s)^{-\beta/\alpha} \d s\d y.
\end{eqnarray*}
Now define $g_{\vartheta,\nu}(t)= (t+t_0)^{\nu-\vartheta}\inf_{x\in B(0,1)}\E|v(t,x)|$ for fixed $t_0>0$, then for $t_0<t<T$,
\begin{eqnarray*}
g_{\vartheta,\nu}(t)&\geq& c_1(t+t_0)^{-\big\{{\beta d}/\alpha+\vartheta-\nu\big\}}\\&+&\frac{\lambda L_\sigma c_2}{\Gamma(1-\vartheta+\nu)} (t+t_0)^{\nu-\vartheta}\int_0^t g_{\vartheta,\nu}(s)\int_{B(0,1)}(t-s)^{-\beta/\alpha} \d s\d y\\
&\geq&  c_1(T+t_0)^{-\big\{{\beta d}/\alpha+\vartheta-\nu\big\}}\\&+&\frac{\lambda L_\sigma c_3}{\Gamma(1-\vartheta+\nu)} (T+t_0)^{\nu-\vartheta} T^{-\beta/\alpha}\int_0^t g_{\vartheta,\nu}(s)\d s
\end{eqnarray*} since $ t_0\leq t\leq T,\,\, 0\leq s<t$ and $t-s\leq t\leq T$. Then we obtain that $g_{\vartheta,\nu}(t)\geq c_4\exp(c_5 t),$ where $c_4=  c_1(T+t_0)^{-\big\{{\beta d}/\alpha+\vartheta-\nu\big\}},$ and $c_5=\frac{\lambda L_\sigma c_3}{\Gamma(1-\vartheta+\nu)} (T+t_0)^{\nu-\vartheta}c_1 T^{-\beta/\alpha}$
\end{proof}
\noindent
The proofs of Theorem \ref{growth-upperbound-non} and Theorem \ref{growth-firstmoment-non} follow from the proofs of the above theorems.
\section{Conclusion}

We observed rather an interesting shift from the usual exponential energy growth bounds for a multiplicative noise perturbation to a class of heat equations. The results showed that the energy growth of the solution is bounded by a product of an algebraic and an exponential functions given by $t^{-(\beta+a\nu)}\exp(c\,t)$, for $c>0$, though the exponential function dominates over the time interval $[t_0,T],\,\,t_0>1$ and $T<\infty$, which causes the solution to behave exponentially. Computational procedure and estimate for the mean and variance for the process for some specific rate functions were given, which can be consequently used for the computation of large variety of physical problems related with non-linear sciences. \\\\
\noindent{\Large\bf Competing Interest}\\\\\
The authors declare that no competing interests exist.
 
\begin{small}
\end{small}
\scriptsize\---------------------------------------------------------------------------------------------------------------------------------------------\\\copyright \it 2017 Omaba; This is an Open Access article distributed under the terms of the Creative Commons Attribution License
(\href{http://creativecommons.org/licenses/by/4.0}{http://creativecommons.org/licenses/by/4.0}),  which permits unrestricted use, distribution, and reproduction in any medium,
provided the original work is properly cited.
\end{document}